\newtheorem{lemma}{Lemma} [section]
\newtheorem{theorem}{Theorem} [section]
\newtheorem{open}{Open Question}
\begin{document}
Sangaku Journal of Mathematics (SJM) \copyright SJM \\
ISSN 2534-9562 \\
Volume 3 (2019), pp.51-66  \\
Received 14 July 2019. Published on-line 04 August 2019 \\ 
web: \url{http://www.sangaku-journal.eu/} \\
\copyright The Author(s) This article is published 
with open access\footnote{This article is distributed under the terms of the Creative Commons Attribution License which permits any use, distribution, and reproduction in any medium, provided the original author(s) and the source are credited.}. \\
\bigskip
\bigskip

\begin{center}
{\Large \textbf{Relationships Between Six Incircles}} \\
\medskip
\bigskip
\textsc{Stanley Rabinowitz} \\
545 Elm St Unit 1,  Milford, New Hampshire 03055, USA \\
e-mail: \href{mailto:stan.rabinowitz@comcast.net}{stan.rabinowitz@comcast.net} \\
web: \url{http://www.StanleyRabinowitz.com/} \\
\end{center}
\bigskip

\textbf{Abstract.} If $P$ is a point inside $\triangle ABC$, then the cevians
through $P$ divide $\triangle ABC$ into six smaller triangles.
We give theorems about the relationship between the radii of the circles inscribed in these triangles.

\medskip
\textbf{Keywords.} Euclidean geometry, triangle geometry, incircles, inradii, cevians.

\medskip
\textbf{Mathematics Subject Classification (2010).} 51M04, 51-04.

\bigskip
\bigskip
\section{Introduction}

\newcommand{\degrees}{^\circ}

Japanese Mathematicians of the Edo period were fond of finding relationships
between the radii of circles associated with triangles.
For example, the 1781 book ``Seiyo Sampo" \cite[no.~84]{Fujita} gives the relationship
$r=\sqrt{r_1r_2}+\sqrt{r_2r_3}+\sqrt{r_3r_1}$ between the radii of the circles in Figure \ref{fig:1814} below.
This result was later inscribed on
an 1814 tablet in the Chiba prefecture \cite[p.~30]{Fukagawa-Pedoe}.

\bigskip
\begin{figure}[h!]
\centering
\includegraphics[width=0.5\linewidth]{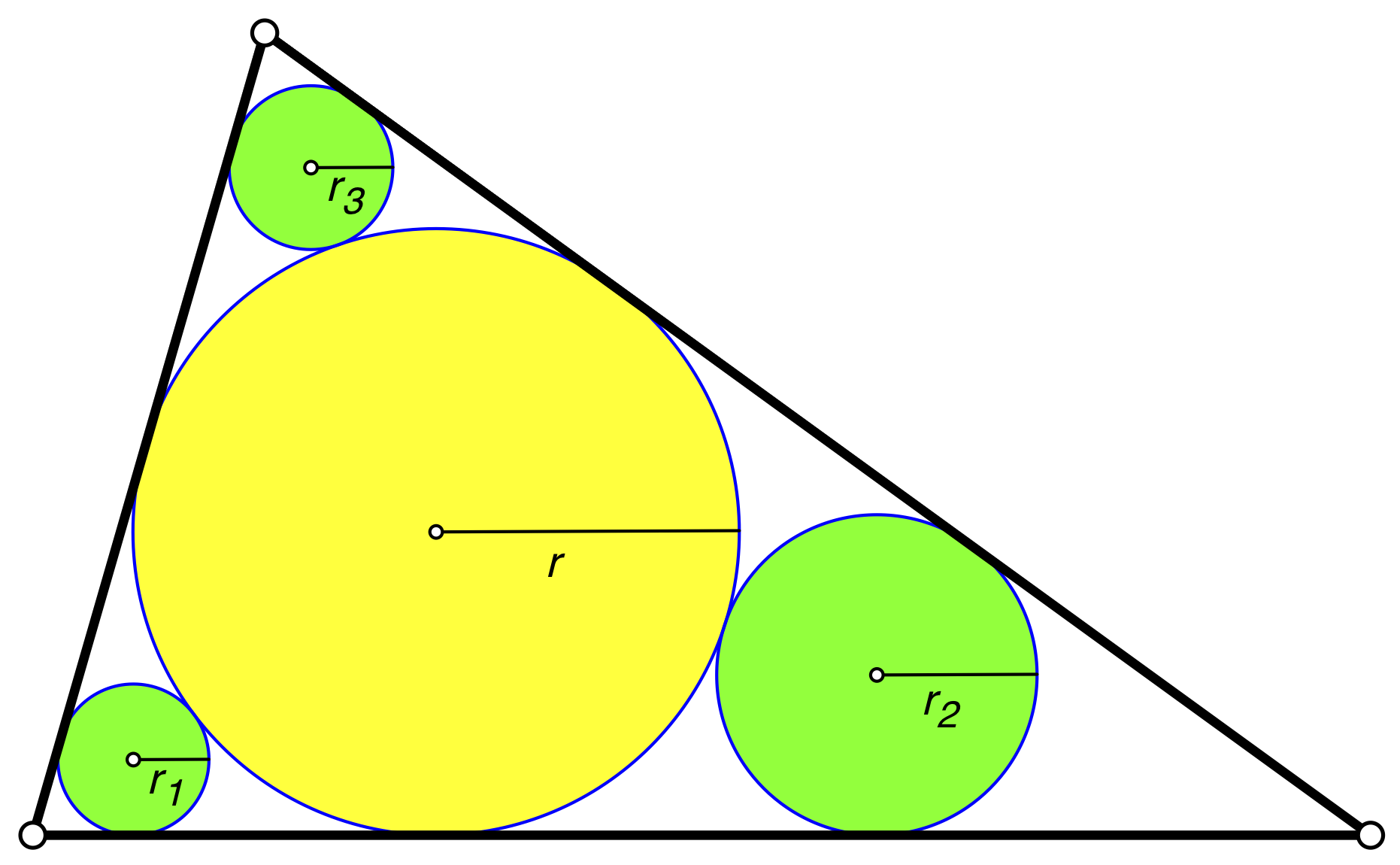}
\caption{}
\label{fig:1814}
\end{figure}

In the spirit of Wasan, we investigate the relationships between the radii of
six circles associated with a triangle and the three cevians through a point
inside that triangle.

\bigskip 
\section{Notation}

Let $P$ be any point inside a triangle $ABC$. The cevians
through $P$ divide $\triangle ABC$ into six smaller triangles.
Circles are inscribed in these six triangles.
The six triangles and their incircles are numbered from 1 to 6 counterclockwise as shown in Figure \ref{fig:preliminary}, where the first triangle is formed by the lines $AP$, $BP$, and $BC$.
The i-th triangle has inradius $r_i$,
semiperimeter $s_i$, circumradius $R_i$, and area $K_i$.

\bigskip
\begin{figure}[h!]
\centering
\includegraphics[width=0.43\linewidth]{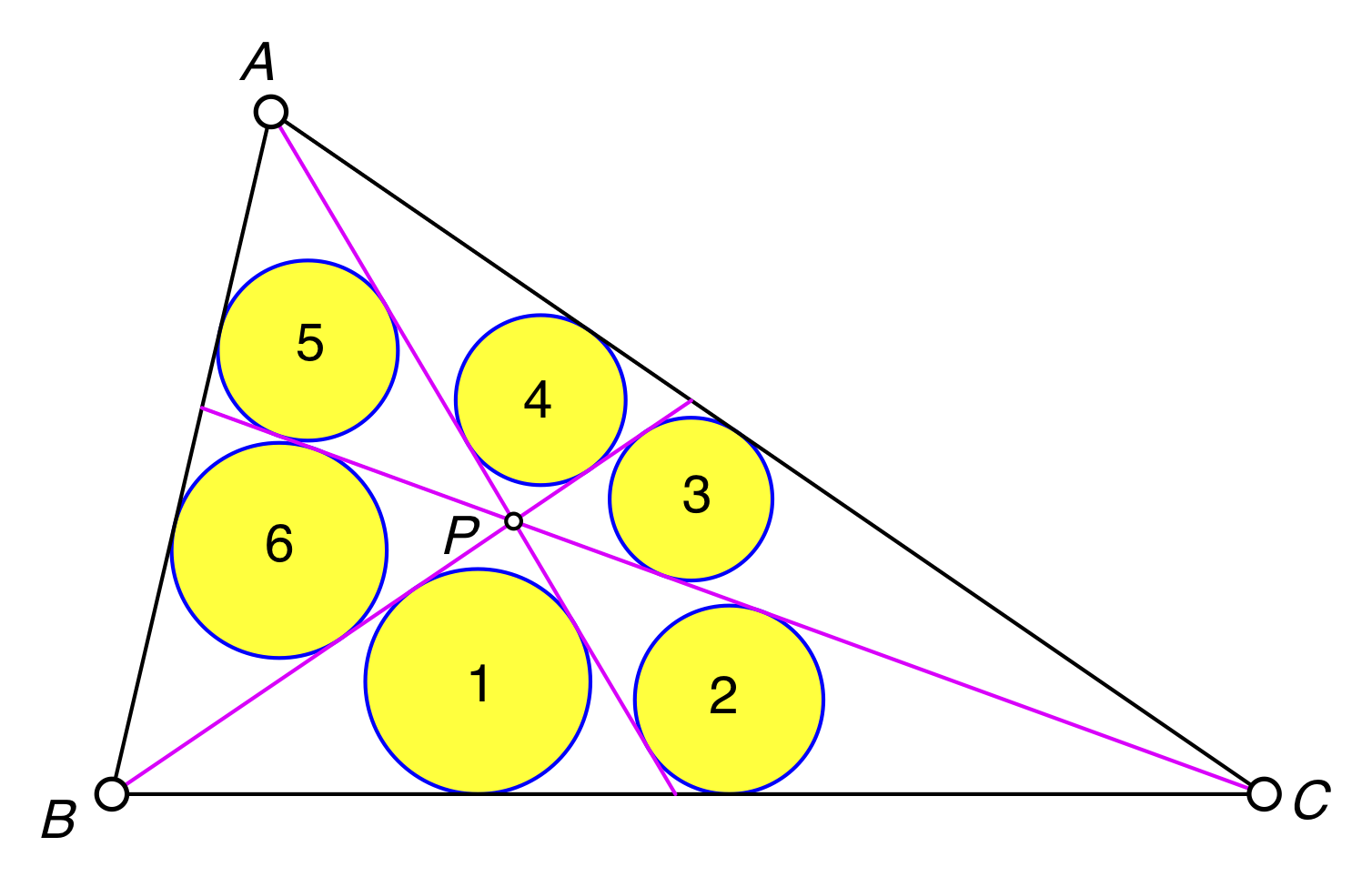}
\caption{numbering}
\label{fig:preliminary}
\end{figure}

If $X$ and $Y$ are points, then we use the notation $XY$ to denote either the line segment joining $X$ and $Y$ or the length of that line segment, depending on the context.

\bigskip 
\section{The Orthocenter}

We start with a known result \cite{Gutierrez} giving the relationship between the $r_i$ when $P$ is the orthocenter.

\begin{theorem}
\label{thm:Orthocenter}
If $P$ is the orthocenter of $\triangle ABC$ (Figure \ref{fig:orthocenter}), then $r_1r_3r_5=r_2r_4r_6$.
\end{theorem}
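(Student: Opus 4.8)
The plan rests on one structural observation: when $P$ is the orthocenter, each of the six small triangles is a \emph{right} triangle. Write $H=P$ and let $H_A,H_B,H_C$ be the feet of the altitudes from $A,B,C$. Every one of the six triangles has a vertex at $H$, a vertex at one of the feet $H_A$, $H_B$, $H_C$, and a vertex at a vertex of $\triangle ABC$; the angle at the foot is a right angle, since there an altitude meets the side of $\triangle ABC$ to which it is perpendicular. So I would describe each $T_i$ by just two quantities: its hypotenuse — the side opposite the right angle, which is always one of the segments $HA$, $HB$, $HC$, of classical lengths $2R\cos A$, $2R\cos B$, $2R\cos C$ — and its acute angle at $H$.

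The key computation is to read off these two data around the hexagon. For each small triangle the angle at $H$ equals $90^\circ$ minus the angle the triangle makes at its vertex on $\triangle ABC$, and that latter angle is an angle between an altitude and a side of $\triangle ABC$; for instance the triangle with vertices $H$, $B$, $H_A$ has angle $\angle H B H_A = 90^\circ - C$ at $B$, hence angle $C$ at $H$, and hypotenuse $HB$. Working through all six this way, and noting that consecutive triangles share a whole edge through $H$ (a leg of both, or the common hypotenuse of both when that edge runs from $H$ to a vertex of $\triangle ABC$), one finds that the odd-indexed triangles $T_1,T_3,T_5$ have the three segments $HA,HB,HC$ as their hypotenuses in some order and the three angles $A,B,C$ as their acute angles at $H$ in some order — and exactly the same is true of the even-indexed triangles $T_2,T_4,T_6$. (One can also check that opposite triangles $T_i$ and $T_{i+3}$ are similar, sharing their acute angle at $H$.)

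To finish, recall that a right triangle with hypotenuse $w$ and acute angle $\theta$ has legs $w\cos\theta$, $w\sin\theta$, hence inradius $r=\tfrac12(w\cos\theta+w\sin\theta-w)=w\,f(\theta)$ with $f(\theta)=\dfrac{\sin\theta\cos\theta}{1+\sin\theta+\cos\theta}$. Thus $r_i=(\text{hypotenuse of }T_i)\cdot f(\text{acute angle of }T_i\text{ at }H)$, and by the previous paragraph
\[
r_1r_3r_5=(HA\cdot HB\cdot HC)\,f(A)f(B)f(C)=r_2r_4r_6,
\]
both products in fact being equal to $8R^{3}(\cos A\cos B\cos C)\,f(A)f(B)f(C)$. (Equivalently, the three similarities give $r_1/r_4$, $r_3/r_6$, $r_5/r_2$ as ratios of the numbers $HA,HB,HC$ that telescope to $1$.)

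The one delicate point is the bookkeeping in the middle step: correctly matching each index $i$ — in the counterclockwise numbering whose $T_1$ is cut off by the lines $AP$, $BP$, $BC$ — to its right-angle vertex, its hypotenuse, and its acute angle at $H$. I would also remark at the outset that the hypothesis that $P$ lies inside $\triangle ABC$ forces $\triangle ABC$ to be acute, so $H$ is interior, the feet $H_A,H_B,H_C$ lie on the open sides, and all of these angles are genuinely acute; granting that, the argument goes through verbatim.
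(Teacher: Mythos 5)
Your proposal is correct and rests on the same key observation as the paper's proof: the six triangles pair off into similar opposite pairs $T_i\sim T_{i+3}$ whose hypotenuses are the segments $HA$, $HB$, $HC$, so the ratios $r_1/HB=r_4/HA$, $r_3/HC=r_6/HB$, $r_5/HA=r_2/HC$ telescope. Your main computation via $r_i=(\text{hypotenuse})\cdot f(\text{acute angle at }H)$ is just a more explicit form of that similarity argument (and is carried out correctly), and your closing parenthetical is essentially verbatim the paper's proof.
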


\bigskip
\begin{figure}[h!]
\centering
\includegraphics[width=0.5\linewidth]{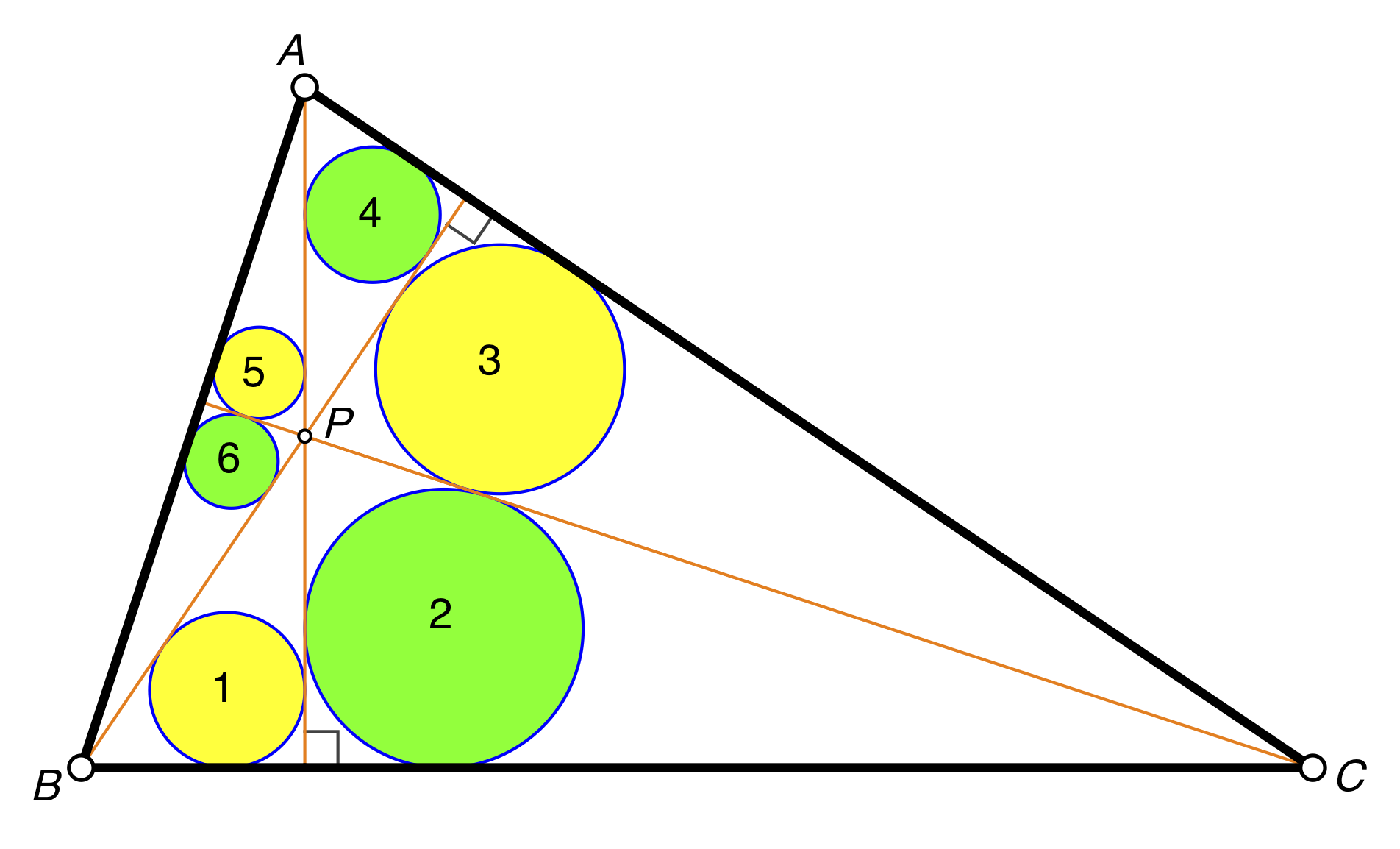}
\caption{$r_1r_3r_5=r_2r_4r_6$}
\label{fig:orthocenter}
\end{figure}

The following proof comes from \cite{hexram}.

\begin{proof}
Note that triangles 1 and 4 in Figure \ref{fig:orthocenter} are similar. Corresponding lengths in similar triangles are in proportion, so $r_1/BP=r_4/AP$. Triangles 2 and 5 are also similar as well as triangles 3 and 6, giving similar proportions. Therefore
$$\frac{r_1}{BP}\cdot\frac{r_3}{CP}\cdot\frac{r_5}{AP}
=\frac{r_4}{AP}\cdot\frac{r_6}{BP}\cdot\frac{r_2}{CP}$$
which implies
$r_1r_3r_5=r_2r_4r_6$.
\end{proof} 

\bigskip 
\section{The Centroid}

Next, we will consider the case when $P$ is the centroid. We start with a few lemmas.

\begin{lemma}
\label{lemma:medians}
If $P$ is the centroid of $\triangle ABC$ (Figure \ref{fig:medians}), then the six triangles formed all have the same area.  That is,
$$K_1=K_2=K_3=K_4=K_5=K_6.$$
\end{lemma}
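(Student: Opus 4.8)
The plan is to prove the stronger statement that each of the six triangles has area $\tfrac16 K$, where $K$ denotes the area of $\triangle ABC$. First I would fix notation: let $D$, $E$, $F$ be the points where the cevians $AP$, $BP$, $CP$ meet $BC$, $CA$, $AB$. Since $P$ is the centroid, these cevians are the medians of $\triangle ABC$, so $D$, $E$, $F$ are the midpoints of $BC$, $CA$, $AB$. Reading the labels off Figure~\ref{fig:medians}, triangles $1$ and $2$ are $\triangle PBD$ and $\triangle PDC$, triangles $3$ and $4$ are $\triangle PCE$ and $\triangle PEA$, and triangles $5$ and $6$ are $\triangle PAF$ and $\triangle PFB$. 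In particular, triangles $1$ and $2$ together make up $\triangle PBC$, triangles $3$ and $4$ together make up $\triangle PCA$, and triangles $5$ and $6$ together make up $\triangle PAB$.

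The second step is to recall the classical fact that the three triangles $\triangle PBC$, $\triangle PCA$, $\triangle PAB$ each have area $\tfrac13 K$ when $P$ is the centroid. I would derive this from the $2:1$ ratio property of the centroid: since $P$ lies on the median $AD$ with $PD = \tfrac13 AD$, the distance from $P$ to line $BC$ equals one third of the distance from $A$ to line $BC$; as $\triangle PBC$ and $\triangle ABC$ share the base $BC$, this gives $\mathrm{area}(\triangle PBC) = \tfrac13 K$, and symmetrically for $\triangle PCA$ and $\triangle PAB$.

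The final step uses the fact that a median of a triangle divides it into two triangles of equal area. In $\triangle PBC$ the point $D$ is the midpoint of the side $BC$, so $PD$ is a median of $\triangle PBC$; hence $K_1 = \mathrm{area}(\triangle PBD) = \mathrm{area}(\triangle PDC) = K_2 = \tfrac12\cdot\tfrac13 K = \tfrac16 K$. The identical argument applied to $\triangle PCA$ (with $PE$ a median) gives $K_3 = K_4 = \tfrac16 K$, and applied to $\triangle PAB$ (with $PF$ a median) gives $K_5 = K_6 = \tfrac16 K$. Combining, $K_1 = K_2 = \cdots = K_6 = \tfrac16 K$, which is the assertion.

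I do not expect a genuine obstacle here. The only points that need care are the bookkeeping — matching the numbering $1,\dots,6$ to the six triangles $\triangle PBD,\dots,\triangle PFB$ exactly as drawn in Figure~\ref{fig:medians} — and the observation, made at the very start, that the cevian from a vertex through the centroid is a median of $\triangle ABC$; this single fact is what simultaneously makes $D$, $E$, $F$ midpoints and makes $PD$, $PE$, $PF$ medians of the three sub-triangles $\triangle PBC$, $\triangle PCA$, $\triangle PAB$.
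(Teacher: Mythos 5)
Your proof is correct and uses essentially the same ingredients as the paper's: the $2:1$ division of a median by the centroid and the fact that equal bases with a common altitude give equal areas. The paper chains equalities between adjacent small triangles ($[PBD]=[PDC]=[CEP]$, etc.) while you first establish $[PBC]=[PCA]=[PAB]=\tfrac13 K$ and then bisect each, but this is only a reorganization of the same argument.
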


\bigskip
\begin{figure}[h!]
\centering
\includegraphics[width=0.5\linewidth]{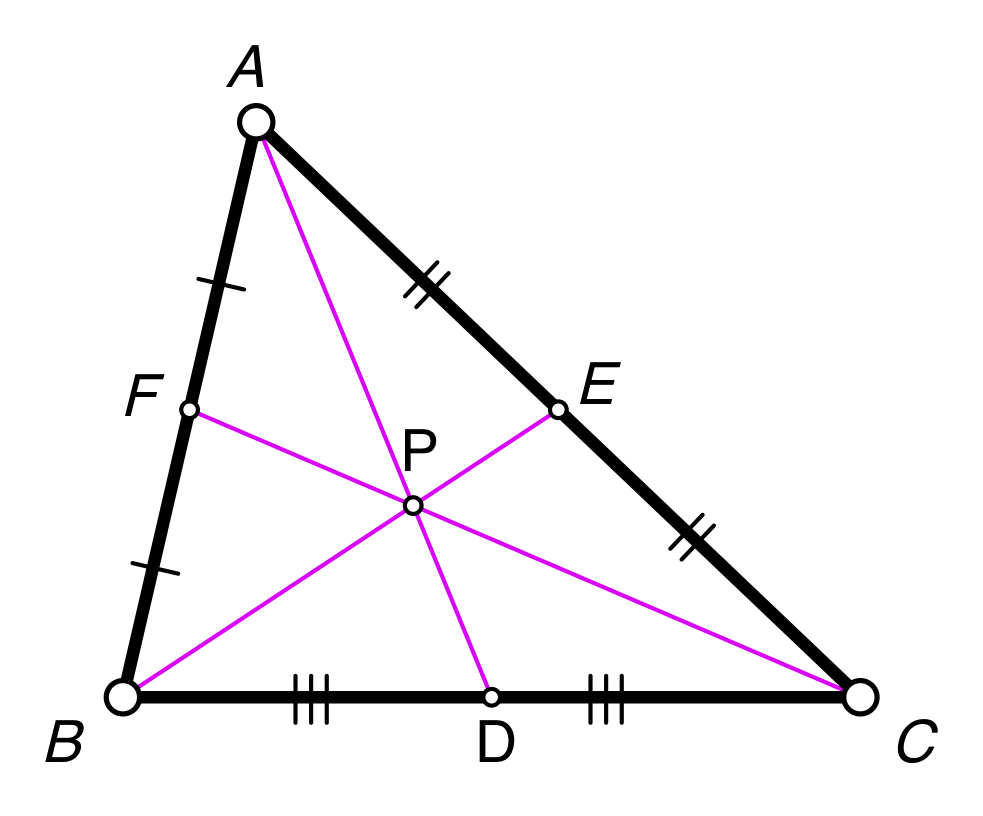}
\caption{six triangles have same area}
\label{fig:medians}
\end{figure}

\begin{proof}
If $XYZ$ is a triangle, then $[XYZ]$ will denote the area of that triangle.
If two triangles have the same altitude, then the ratio of their areas is the same as
the ratio of their bases. Thus $[PBD]=[PDC]$.
Since the centroid divides a median in the ratio $2:1$, this means that $[CEP]=\frac{1}{2}[CPB]=[CPD]$. Thus triangles 1, 2, and 3 have the same area. In the same manner, we see that all six triangles have the same area.
\end{proof} 

\begin{lemma}
\label{lemma:perims}
If $P$ is the centroid of $\triangle ABC$, then $s_1+s_3+s_5=s_2+s_4+s_6$.
\end{lemma}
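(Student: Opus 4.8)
The plan is to write out the perimeter of each of the six small triangles explicitly, in terms of the six ``spoke'' lengths from $P$ and the six half-sides of $\triangle ABC$, and then to observe that everything cancels in pairs.

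First I would fix notation in the style of Lemma \ref{lemma:medians}: let $D$, $E$, $F$ be the midpoints of $BC$, $CA$, $AB$, so that the medians $AD$, $BE$, $CF$ all pass through $P$. With the numbering from Section~2 (triangle~$1$ being the piece cut off along $BC$ adjacent to $B$, and so on counterclockwise), the six triangles are
\[
\triangle PBD,\quad \triangle PDC,\quad \triangle PCE,\quad \triangle PEA,\quad \triangle PAF,\quad \triangle PFB ,
\]
and consecutive ones in this list share a spoke ($PD$, $PC$, $PE$, $PA$, $PF$, $PB$ respectively).

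Next I would simply read off the three side lengths of each triangle and add. Doubling the odd-indexed semiperimeters,
\[
2(s_1+s_3+s_5)=(PB+BD+DP)+(PC+CE+EP)+(PA+AF+FP),
\]
and doubling the even-indexed ones,
\[
2(s_2+s_4+s_6)=(PD+DC+CP)+(PE+EA+AP)+(PF+FB+BP).
\]
In each of the two expressions the spoke lengths $PA,PB,PC,PD,PE,PF$ occur exactly once, so they contribute the same amount to both sides. The leftover terms are $BD+CE+AF$ versus $DC+EA+FB$; since $D$, $E$, $F$ are midpoints, $BD=DC$, $CE=EA$, $AF=FB$, so these agree as well. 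Dividing by $2$ gives $s_1+s_3+s_5=s_2+s_4+s_6$.

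The only point needing care is the bookkeeping behind those two displays: one must confirm that consecutive triangles in the counterclockwise numbering share a spoke (so that $PA,\dots,PF$ are each counted once on each side) and that the two triangles flanking a given side of $\triangle ABC$ — which receive the two halves of that side — have opposite parity, so that the midpoint relations $BD=DC$, $CE=EA$, $AF=FB$ are exactly what is needed. Both facts are immediate from the adjacency pattern listed above, so there is really no obstacle here; the lemma is a counting identity once the midpoint property of the centroid is used. (In fact only the fact that each cevian foot is the midpoint of its side enters the argument, not the full definition of the centroid.)
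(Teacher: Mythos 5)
Your proof is correct and is essentially identical to the paper's: both write out the six doubled semiperimeters, observe that each spoke $PA,\dots,PF$ appears once in the odd sum and once in the even sum, and reduce the identity to $BD=DC$, $CE=EA$, $AF=FB$ from the midpoint property. Nothing further is needed.
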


\begin{proof}
We have the following six equations for the perimeters of the six triangles.

$$
\begin{aligned}
2s_1&=PB+PD+BD,\qquad&2s_2&=PD+PC+DC,\\
2s_3&=PC+PE+CE,&2s_4&=PE+PA+EA,\\
2s_5&=PA+PF+AF,&2s_6&=PF+PB+FB.
\end{aligned}
$$

Thus
$2s_1+2s_3+2s_5-(2s_2+2s_4+2s_6)=(BD-DC)+(CE-EA)+(AF-FB)=0$
and the lemma follows.\end{proof}

We can now state the relationship between the $r_i$ when $P$ is the centroid.
This theorem is attributed to Reidt in \cite[p.~618]{Dalle}.

\begin{theorem}
\label{thm:Centroid}
If $P$ is the centroid of $\triangle ABC$, then
$$\frac{1}{r_1}+\frac{1}{r_3}+\frac{1}{r_5}=\frac{1}{r_2}+\frac{1}{r_4}+\frac{1}{r_6}.$$
\end{theorem}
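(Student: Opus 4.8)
The plan is to combine the two preceding lemmas through the elementary identity $K_i = r_i s_i$, which holds for every triangle (area equals inradius times semiperimeter). Rewriting this as $1/r_i = s_i/K_i$ converts the statement about reciprocals of inradii into a statement about semiperimeters divided by areas, and the two lemmas are exactly engineered to control those two ingredients.

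First I would invoke Lemma~\ref{lemma:medians}: since $P$ is the centroid, all six small triangles have a common area, call it $K$, so $K_i = K$ for $i = 1,\dots,6$. Hence $1/r_i = s_i/K$ for each $i$. Summing over the odd indices gives
$$\frac{1}{r_1}+\frac{1}{r_3}+\frac{1}{r_5} = \frac{s_1+s_3+s_5}{K},$$
and summing over the even indices gives $\dfrac{1}{r_2}+\dfrac{1}{r_4}+\dfrac{1}{r_6} = \dfrac{s_2+s_4+s_6}{K}$. Then I would apply Lemma~\ref{lemma:perims}, which asserts $s_1+s_3+s_5 = s_2+s_4+s_6$; dividing this equality by $K$ yields precisely the claimed identity.

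The only point requiring any care is that the indexing of the triangles (and hence of the $s_i$ and $r_i$) used in the two lemmas agrees with the counterclockwise numbering fixed in Section~2, so that "odd" triangles $1,3,5$ and "even" triangles $2,4,6$ are the same families in both lemmas; this is guaranteed by the setup and needs no separate argument. Beyond that there is no genuine obstacle: all of the geometric content has already been extracted into Lemmas~\ref{lemma:medians} and~\ref{lemma:perims}, and the theorem is a two-line consequence of $r = K/s$.
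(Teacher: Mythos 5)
Your proposal is correct and follows exactly the paper's own argument: both use $1/r_i = s_i/K_i$, Lemma~\ref{lemma:medians} to equate all the areas to a common $K$, and Lemma~\ref{lemma:perims} divided by $K$ to conclude. No differences worth noting.
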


\begin{proof}
Recall that if $r$, $s$, and $K$ are the inradius, semiperimeter, and area of a triangle, respectively, then $r=K/s$.
From Lemma \ref{lemma:medians}, the six triangles have the same area. Call this area $K$.
From Lemma \ref{lemma:perims}, $s_1+s_3+s_5=s_2+s_4+s_6$.
Divide both sides of this equation by $K$ and use the fact that $1/r_i=s_i/K_i$ to get
the desired identity.
\end{proof}

We note a similar result from \cite[p.~618]{Dalle}.
\begin{theorem}
\label{thm:OddEvenReciprocals}
If $P$ is the centroid of $\triangle ABC$, then
$R_1R_3R_5=R_2R_4R_6$.
\end{theorem}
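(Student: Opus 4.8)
The plan is to use the elementary identity $R=\dfrac{abc}{4K}$ relating a triangle's circumradius $R$, its three side lengths $a,b,c$, and its area $K$, in combination with Lemma \ref{lemma:medians}.

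First I would read off from Figure \ref{fig:medians} the three sides of each of the six triangles in terms of the segments $PA,PB,PC$, $PD,PE,PF$, and $BD,DC,CE,EA,AF,FB$: triangle $1$ is $PBD$ with sides $PB,BD,PD$; triangle $2$ is $PDC$ with sides $PD,DC,PC$; triangle $3$ is $PCE$ with sides $PC,CE,PE$; triangle $4$ is $PEA$ with sides $PE,EA,PA$; triangle $5$ is $PAF$ with sides $PA,AF,PF$; triangle $6$ is $PFB$ with sides $PF,FB,PB$. These are exactly the edges whose lengths appear in the perimeter equations of Lemma \ref{lemma:perims}.

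Next, since $P$ is the centroid, Lemma \ref{lemma:medians} gives the six triangles a common area $K$, so for each $i$ the circumradius $R_i$ equals the product of the three sides of triangle $i$ divided by $4K$. Multiplying the three odd-indexed and the three even-indexed expressions yields
$$\frac{R_1R_3R_5}{R_2R_4R_6}=\frac{(PB\cdot BD\cdot PD)(PC\cdot CE\cdot PE)(PA\cdot AF\cdot PF)}{(PD\cdot DC\cdot PC)(PE\cdot EA\cdot PA)(PF\cdot FB\cdot PB)}.$$
Each of the six lengths $PA,PB,PC,PD,PE,PF$ occurs exactly once in the numerator and exactly once in the denominator, so after cancellation the ratio collapses to $\dfrac{BD\cdot CE\cdot AF}{DC\cdot EA\cdot FB}$. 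Finally, because the cevians here are the medians, $D,E,F$ are the midpoints of the sides, so $BD=DC$, $CE=EA$, $AF=FB$; the ratio equals $1$, which is the assertion. (One could instead invoke Ceva's theorem at this last step, which shows the side-product ratio is $1$ for cevians through any interior point of the triangle; only the common-area step genuinely uses that $P$ is the centroid.)

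I do not expect a real obstacle: the argument is essentially bookkeeping. The one point requiring care is the first step — correctly matching each of the six small triangles with its triple of edge lengths, so that the cancellation in the product of circumradii is legitimate. Once the edges are listed correctly and Lemma \ref{lemma:medians} supplies the common area $K$, the rest is immediate.
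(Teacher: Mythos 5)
Your argument is correct, and it is worth noting that the paper itself offers no proof of this theorem --- it is stated with only a citation to Dalle --- so yours fills a genuine gap. The bookkeeping checks out: each small triangle's side triple is identified correctly, the formula $R_i=\frac{(\text{product of sides})}{4K_i}$ applies, the six cevian segments $PA,\dots,PF$ each appear once in the odd product and once in the even product, and the leftover ratio $\frac{BD\cdot CE\cdot AF}{DC\cdot EA\cdot FB}$ equals $1$ because the medians bisect the sides. Your closing parenthetical deserves more emphasis than you give it: since Ceva's theorem makes that side-product ratio equal to $1$ for cevians through \emph{any} interior point, and since the paper later proves $K_1K_3K_5=K_2K_4K_6$ for any interior point $P$, your computation actually shows that $R_1R_3R_5=R_2R_4R_6$ holds for \emph{every} point $P$ inside the triangle --- the centroid hypothesis (and Lemma \ref{lemma:medians}) is not needed at all. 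You only need the product of the odd-indexed areas to equal the product of the even-indexed ones, not all six areas to coincide. So your method not only proves the stated theorem but strengthens it considerably.
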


\bigskip 
\section{The Circumcenter}

Now we will consider the case when $P$ is the circumcenter.

\begin{lemma}
\label{lemma:circumtrig}
Let $P$ be the circumcenter of $\triangle ABC$ and let $R$ be its circumradius.
Let $\angle PAB=\angle PBA=\alpha$ and $\angle PBC=\beta$ (Figure \ref{fig:circumtrig}).
Then $R/r_1=\cot\alpha+\cot\frac{\beta}{2}.$
\end{lemma}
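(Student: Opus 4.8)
The plan is to work entirely inside the small triangle labelled $1$, namely $\triangle PBD$, where $D$ is the point in which the cevian $AP$ meets side $BC$ (so that, as in the perimeter computations above, the sides of triangle $1$ are $PB$, $PD$, $BD$). First I would read off two of its angles from the configuration. Since $D$ lies between $B$ and $C$, the angle of $\triangle PBD$ at $B$ is all of $\angle PBC$, that is, $\beta$. Since $P$ is the circumcenter, $PA=PB=R$, so $\triangle PAB$ is isosceles with base angles $\alpha$ and apex angle $\angle APB=\pi-2\alpha$; as $A$, $P$, $D$ are collinear with $P$ between $A$ and $D$ (because $P$ is interior to $\triangle ABC$), the angle of $\triangle PBD$ at $P$ is $\angle BPD=\pi-\angle APB=2\alpha$.

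Next I would invoke the elementary fact that, in any triangle, the tangent length from a vertex $V$ to the incircle equals $r\cot\frac{\angle V}{2}$: the incenter $I$, the point $T$ where the incircle touches a side through $V$, and $V$ form a right triangle with legs $IT=r$ and $VT$, and $\angle IVT=\frac{1}{2}\angle V$ because $VI$ bisects the angle at $V$. Summing the two tangent lengths along the side $PB$ of $\triangle PBD$, whose endpoints carry the angles $2\alpha$ (at $P$) and $\beta$ (at $B$), gives
$$PB=r_1\cot\frac{2\alpha}{2}+r_1\cot\frac{\beta}{2}=r_1\left(\cot\alpha+\cot\frac{\beta}{2}\right).$$
Because $P$ is the circumcenter, $PB=R$, and dividing by $r_1$ gives exactly $R/r_1=\cot\alpha+\cot\frac{\beta}{2}$.

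I do not expect a genuine obstacle here; the only thing needing care is the configuration bookkeeping — that triangle $1$ really is $\triangle PBD$ under the stated numbering, that $\angle PBD$ is the whole of $\angle PBC$ (needs $D$ strictly between $B$ and $C$), and that $\angle BPD=2\alpha$ rather than $\pi-2\alpha$ (needs the order $A$–$P$–$D$ on the cevian), all of which follow from $P$ lying inside $\triangle ABC$. If one preferred to avoid the tangent-length lemma, the same identity drops out of the formula $r_1=4R_1\sin\frac{\beta}{2}\sin\alpha\cos\frac{2\alpha+\beta}{2}$ for the inradius in terms of the angles $\beta$, $2\alpha$, $\pi-2\alpha-\beta$ of $\triangle PBD$, combined with the law of sines $PB=2R_1\sin(2\alpha+\beta)$ and the identity $\sin(2\alpha+\beta)=2\sin\!\left(\alpha+\frac{\beta}{2}\right)\cos\!\left(\alpha+\frac{\beta}{2}\right)$; but the tangent-length argument is shorter and sidesteps trigonometric manipulation altogether.
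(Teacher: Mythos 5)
Your proposal is correct and follows essentially the same route as the paper: both identify $\angle BPD=2\alpha$ (via the exterior angle of the isosceles triangle $APB$) and $\angle PBD=\beta$, then split the side $PB=R$ into the two tangent lengths $r_1\cot\alpha$ and $r_1\cot\frac{\beta}{2}$ from $P$ and $B$ to incircle 1. The paper phrases the tangent-length step by dropping a perpendicular from the incenter to $PB$, which is exactly the right-triangle argument you give for the tangent-length fact.
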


\bigskip
\begin{figure}[h!]
\centering
\includegraphics[width=0.4\linewidth]{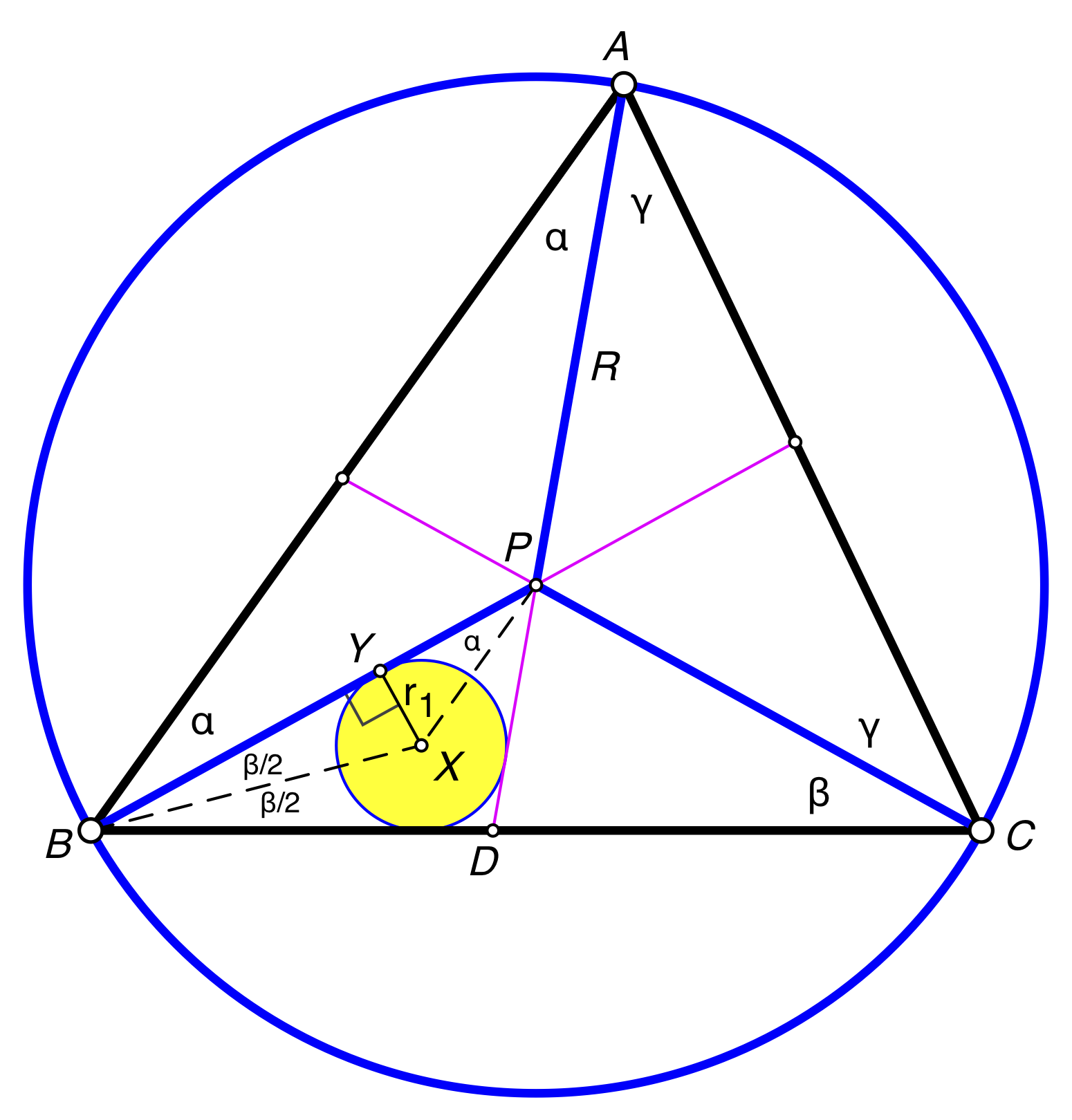}
\caption{}
\label{fig:circumtrig}
\end{figure}

\begin{proof}
Let $X$ be the center of incircle 1 (Figure \ref{fig:circumtrig}). Let $Y$ be the foot of the perpendicular from $X$ to $PB$.
Since $BX$ is the bisector of $\angle PBD$, $\angle PBX=\beta/2$.
Since $\angle BPD=\angle PAB+\angle PBA$, $\angle BPX=\alpha$.
Then $PY=r_1\cot\alpha$ and $YB=r_1\cot\frac{\beta}{2}$. Since $PB=R$, this gives
$R=r_1\cot\alpha+r_1\cot\frac{\beta}{2}$ and the result follows.
\end{proof}

We now give the relationship between the $r_i$ when $P$ is the circumcenter.

\begin{theorem}
\label{thm:Circumcenter}
If $P$ is the circumcenter of $\triangle ABC$ (Figure \ref{fig:circumcenter}), then
$$\frac{1}{r_1}+\frac{1}{r_3}+\frac{1}{r_5}=\frac{1}{r_2}+\frac{1}{r_4}+\frac{1}{r_6}.$$
\end{theorem}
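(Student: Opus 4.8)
The plan is to push the trigonometric computation behind Lemma~\ref{lemma:circumtrig} through for all six triangles at once and then add. First I would extract from that lemma the general fact it really proves: the $i$-th small triangle has exactly one side of length $R$, namely the segment joining $P$ to the vertex of $\triangle ABC$ it contains (recall $PA=PB=PC=R$), and dropping the perpendicular from the incenter of that triangle onto this side and reading off its two pieces, exactly as in the proof of Lemma~\ref{lemma:circumtrig}, gives
$$\frac{R}{r_i}=\cot\psi_i+\cot\chi_i ,$$
where $2\psi_i$ is the angle of the $i$-th triangle at $P$ and $2\chi_i$ is its angle at the vertex of $\triangle ABC$ it contains. (The foot of the perpendicular lands inside the relevant segment because $P$ is interior to $\triangle ABC$; this is where acuteness of $\triangle ABC$ is used.)

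Next I would read off the $\psi_i$ and $\chi_i$ from the two standard facts about the circumcenter: the central angles $\angle BPC=2A$, $\angle CPA=2B$, $\angle APB=2C$, and $\angle PAB=\angle PBA=90\degrees-C$ with its cyclic variants. Since $P$ lies between each vertex and the foot of the cevian on the opposite side, the angle of a small triangle at $P$ is $180\degrees$ minus one of the three central angles; going counterclockwise through triangles $1,\dots,6$ these angles at $P$ are $180\degrees-2C$, $180\degrees-2B$, $180\degrees-2A$, $180\degrees-2C$, $180\degrees-2B$, $180\degrees-2A$, so $\psi_i=90\degrees-C$, $90\degrees-B$, $90\degrees-A$, $90\degrees-C$, $90\degrees-B$, $90\degrees-A$. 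Likewise the angle of a small triangle at its $\triangle ABC$-vertex is one of the six angles $\angle PAB,\angle PAC,\angle PBC,\angle PBA,\angle PCA,\angle PCB$ into which the cevians cut the angles of $\triangle ABC$, and counterclockwise these give $2\chi_i=90\degrees-A$, $90\degrees-A$, $90\degrees-B$, $90\degrees-B$, $90\degrees-C$, $90\degrees-C$ for $i=1,\dots,6$.

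Then I would sum the identity $R/r_i=\cot\psi_i+\cot\chi_i$ over the odd indices and over the even indices. The odd sum is
$$R\left(\frac{1}{r_1}+\frac{1}{r_3}+\frac{1}{r_5}\right)=\big[\cot(90\degrees-C)+\cot(90\degrees-A)+\cot(90\degrees-B)\big]+\left[\cot\frac{90\degrees-A}{2}+\cot\frac{90\degrees-B}{2}+\cot\frac{90\degrees-C}{2}\right],$$
while the even sum contributes the $\psi$-cotangents with arguments $90\degrees-B$, $90\degrees-C$, $90\degrees-A$ and the $\chi$-cotangents once more with arguments $\tfrac{90\degrees-A}{2}$, $\tfrac{90\degrees-B}{2}$, $\tfrac{90\degrees-C}{2}$ --- exactly the same total. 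Hence $1/r_1+1/r_3+1/r_5$ and $1/r_2+1/r_4+1/r_6$ agree after multiplying by $R$, which is the theorem.

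I do not expect a genuine obstacle here: Lemma~\ref{lemma:circumtrig} does all the geometric work, and what is left is bookkeeping. The delicate point is the counterclockwise matching of each of the six triangles with its angle at $P$ and its angle at the relevant vertex of $\triangle ABC$, and then the observation that $\{\psi_1,\psi_3,\psi_5\}=\{\psi_2,\psi_4,\psi_6\}$ and $\{\chi_1,\chi_3,\chi_5\}=\{\chi_2,\chi_4,\chi_6\}$ as multisets --- equivalently, that each of $A$, $B$, $C$ occurs once in the odd list and once in the even list, in each of the two slots. Getting those incidences right is essentially the entire argument.
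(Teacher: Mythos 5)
Your proposal is correct and follows essentially the same route as the paper: the paper likewise applies Lemma~\ref{lemma:circumtrig} to each of the six small triangles, obtaining $R/r_i$ as a sum of two cotangents (one for the half-angle at $P$, one for the half-angle at the vertex of $\triangle ABC$), and then observes that the odd-indexed and even-indexed sums involve the same six cotangents. Your only departure is cosmetic---you write the angles in terms of $A$, $B$, $C$ where the paper keeps the base angles $\alpha$, $\beta$, $\gamma$ of the isosceles triangles $PAB$, $PBC$, $PCA$.
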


\bigskip
\begin{figure}[h!]
\centering
\includegraphics[width=0.4\linewidth]{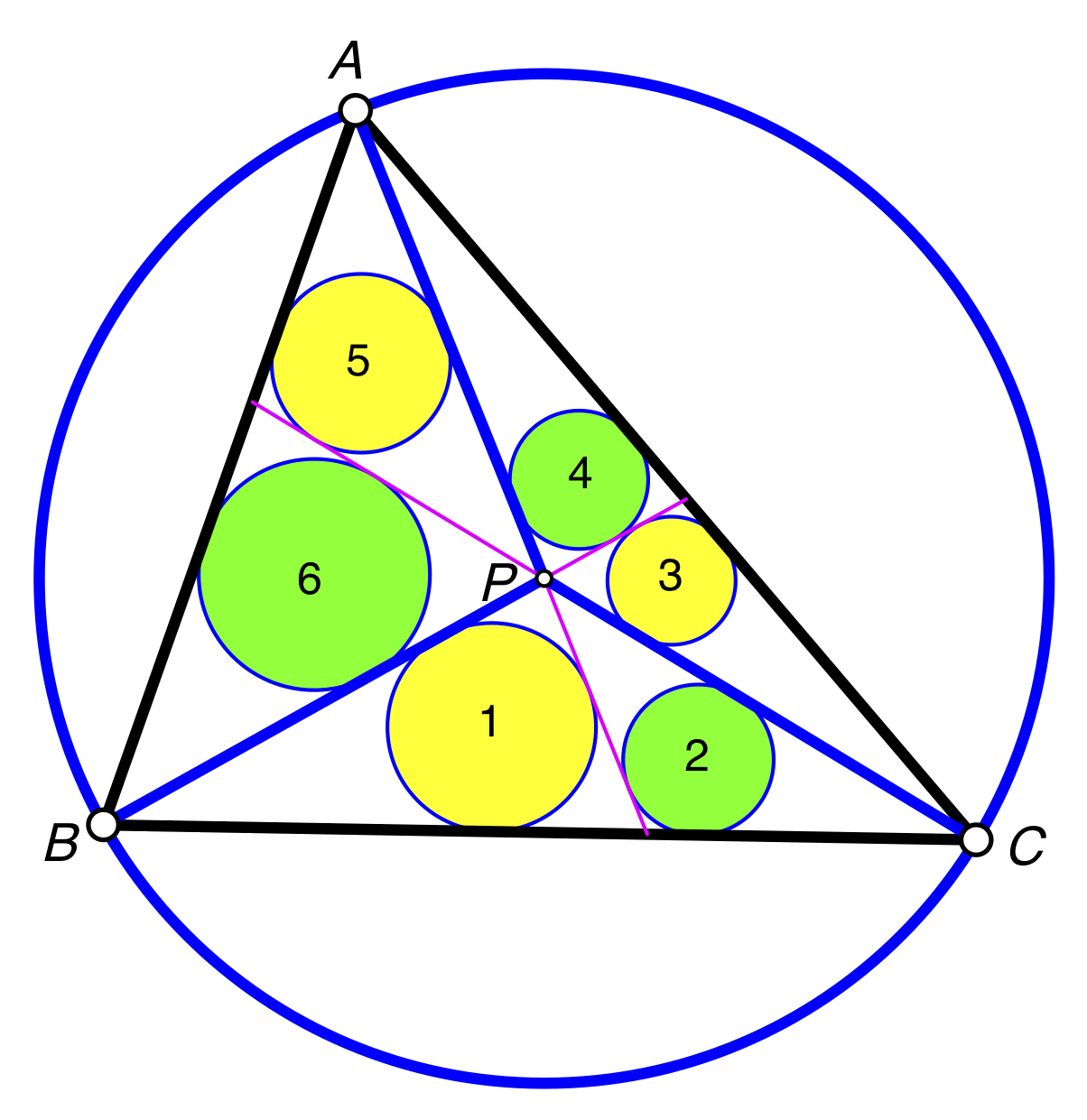}
\caption{$\frac{1}{r_1}+\frac{1}{r_3}+\frac{1}{r_5}=\frac{1}{r_2}+\frac{1}{r_4}+\frac{1}{r_6}$}
\label{fig:circumcenter}
\end{figure}

\begin{proof}
Let $R$ be the circumradius of $\triangle ABC$.
Let $\angle PAB=\angle PBA=\alpha$, $\angle PBC=\angle PCB=\beta$,
and $\angle PCA=\angle PAC=\gamma$ (Figure \ref{fig:circumtrig}).
By Lemma \ref{lemma:circumtrig}, we have the following six relationships.
$$
\begin{aligned}
R/r_1&=\cot\alpha+\cot\frac{\beta}{2},\qquad&
R/r_2&=\cot\gamma+\cot\frac{\beta}{2},\\
R/r_3&=\cot\beta+\cot\frac{\gamma}{2},&
R/r_4&=\cot\alpha+\cot\frac{\gamma}{2},\\
R/r_5&=\cot\gamma+\cot\frac{\alpha}{2},&
R/r_6&=\cot\beta+\cot\frac{\alpha}{2}.
\end{aligned}
$$
Adding the equations on the left gives the same result as adding the equations on the right. Dividing out the common factor $R$ proves the theorem.
\end{proof}

We mention the relationship between the $R_i$ when $P$ is the circumcenter.

\begin{theorem}
If $P$ is the circumcenter of $\triangle ABC$, then $R_1=R_2$. 
\end{theorem}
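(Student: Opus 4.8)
The plan is to identify triangles~1 and~2 explicitly, observe that they share the cevian segment $PD$ (equivalently, the common vertex $D$), and then apply the extended law of sines in the form that lets the equality $PB=PC$ do all the work. Following the notation of Lemma~\ref{lemma:perims}, let $D$ be the point where the cevian $AP$ meets $BC$, so that triangle~1 is $\triangle PBD$ and triangle~2 is $\triangle PDC$. Since $P$ is interior to $\triangle ABC$, the point $D$ lies strictly between $B$ and $C$, so $\angle PDB$ and $\angle PDC$ are supplementary and hence $\sin\angle PDB=\sin\angle PDC$.

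Next I would write the law of sines for each of the two triangles, choosing in each the side opposite the vertex $D$: in $\triangle PBD$ that side is $PB$, giving $2R_1=PB/\sin\angle PDB$, and in $\triangle PDC$ it is $PC$, giving $2R_2=PC/\sin\angle PDC$. Because $P$ is the circumcenter of $\triangle ABC$, we have $PB=PC=R$, the circumradius. Substituting and using the equality of the two sines from the previous step yields $2R_1=R/\sin\angle PDB=R/\sin\angle PDC=2R_2$, so $R_1=R_2$.

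There is no genuine obstacle here; the statement is essentially immediate once the correct side--angle pairing is chosen. The only points needing care are the bookkeeping of which small triangles carry the labels~1 and~2 and which element they have in common, and the (automatic, since $P$ is interior) fact that $D$ lies between $B$ and $C$, which legitimizes the supplementary-angle remark. An equivalent route would use the common side $PD$ together with the equal base angles $\angle PBC=\angle PCB$ of the isosceles triangle $PBC$, or the formula $R=abc/4K$ combined with $[\triangle PBD]/[\triangle PDC]=BD/DC$; each gives the conclusion with the same one-line computation.
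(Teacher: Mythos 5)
Your proof is correct and follows essentially the same route as the paper's: apply the extended law of sines in triangles $PBD$ and $PDC$ to the sides $PB$ and $PC$ opposite the supplementary angles at $D$, then use $PB=PC=R$. No gaps.
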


\begin{proof}
Let $R$ be the circumradius of $\triangle ABC$, so that $PA=PB=PC=R$.
Let $\angle BDP=\alpha$ and $\angle PDC=\beta$ (Figure \ref{fig:twoEqualCircles}). By the Extended Law of Sines in $\triangle PBD$, $R/\sin\alpha=2R_1$. Similarly for $\triangle PCD$, $R/\sin\beta=2R_2$. But since $\alpha$ and $\beta$ are supplementary, $\sin\alpha=\sin\beta$. Therefore, $R_1=R_2$.
\end{proof}

\bigskip
\begin{figure}[h!]
\centering
\includegraphics[width=0.5\linewidth]{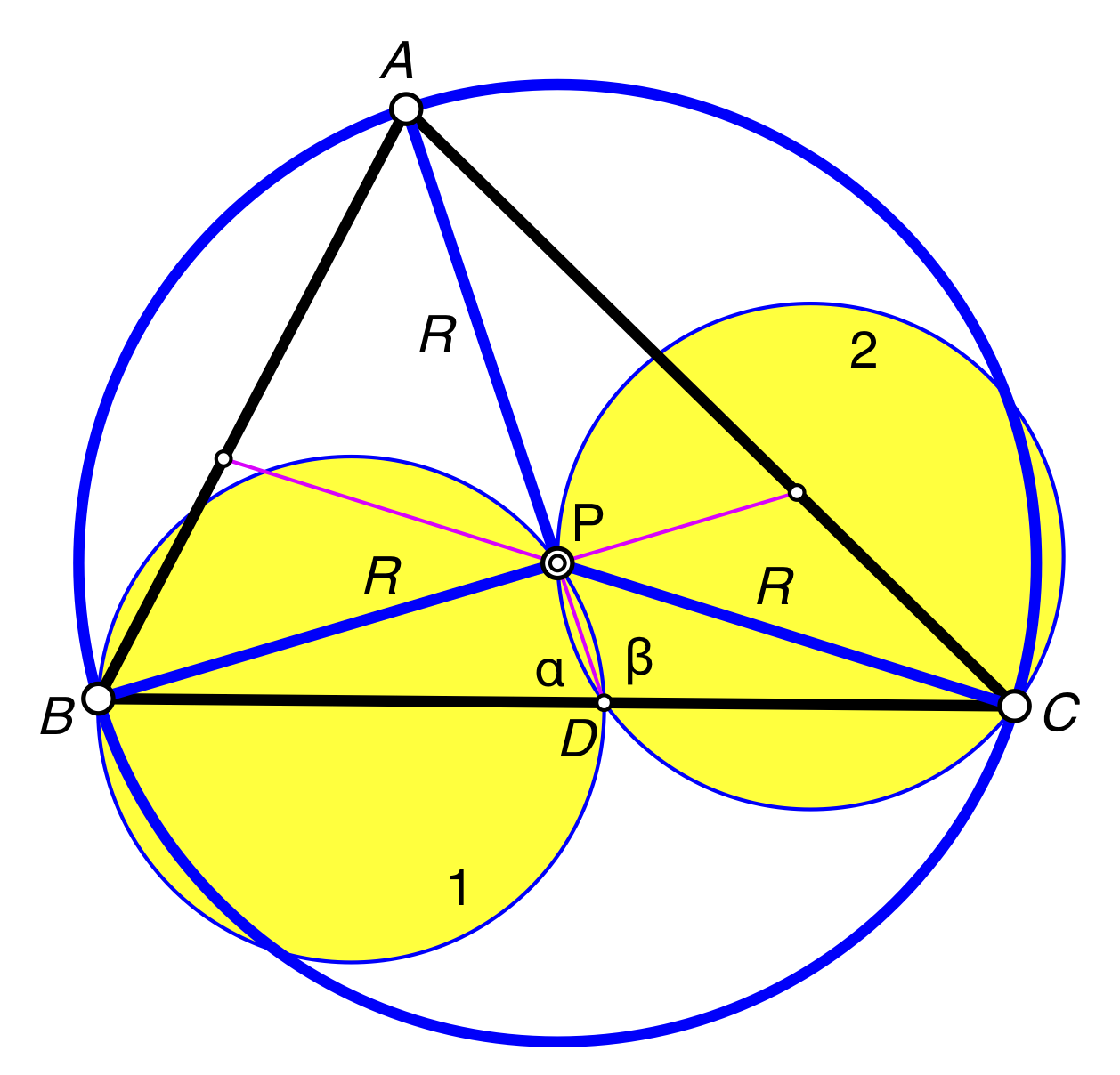}
\caption{Yellow circles are congruent}
\label{fig:twoEqualCircles}
\end{figure}

\bigskip 
\section{The Incenter}

Next, we will consider some cases when $P$ is the incenter.

\begin{theorem}
\label{thm:incenter60}
If $P$ is the incenter of $\triangle ABC$ and $\angle ABC=60\degrees$ (Figure \ref{fig:incenter60}), then
$$\frac{1}{r_1}+\frac{1}{r_4}+\frac{1}{r_5}=\frac{1}{r_2}+\frac{1}{r_3}+\frac{1}{r_6}.$$
\end{theorem}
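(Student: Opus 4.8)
The plan is to obtain an explicit formula for each $1/r_i$ in terms of the angles $A,B,C$ and the inradius $r$ of $\triangle ABC$, and then to see that the asserted identity collapses to a trigonometric identity that is true precisely when $B=60\degrees$. Let $D$, $E$, $F$ be the feet of the internal bisectors from $A$, $B$, $C$ on $BC$, $CA$, $AB$, so that in the numbering of the paper triangle $1$ is $BPD$, triangle $2$ is $DPC$, triangle $3$ is $CPE$, triangle $4$ is $EPA$, triangle $5$ is $APF$, and triangle $6$ is $FPB$. I will use two standard facts about the incenter: $PA=r/\sin\frac A2$, $PB=r/\sin\frac B2$, $PC=r/\sin\frac C2$ (drop a perpendicular from $P$ to a side), and $\angle BPC=90\degrees+\frac A2$, $\angle CPA=90\degrees+\frac B2$, $\angle APB=90\degrees+\frac C2$ (angle sum in $\triangle BPC$, etc.). Since the three cevians pass through $P$, the angle of each small triangle at $P$ is the supplement of one of these three incenter angles; for instance $\angle BPD=180\degrees-\angle APB=90\degrees-\frac C2$. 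In this way triangles $1$ and $4$ have angle $90\degrees-\frac C2$ at $P$, triangles $2$ and $5$ have angle $90\degrees-\frac B2$, and triangles $3$ and $6$ have angle $90\degrees-\frac A2$; moreover the angle of the $i$-th triangle at the vertex of $\triangle ABC$ lying on it is half the angle of $\triangle ABC$ there (for instance $\angle PBD=\frac B2$).

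Next I would establish the following analogue of Lemma \ref{lemma:circumtrig}: if the $i$-th triangle contains a vertex of $\triangle ABC$ at which its angle is $\varphi$ and its angle at $P$ is $\theta$, then
\[
\frac{r}{r_i}=\sin\varphi\left(\cot\tfrac\theta2+\cot\tfrac\varphi2\right).
\]
The proof mimics that of Lemma \ref{lemma:circumtrig}: let $X_i$ be the incenter of the $i$-th triangle and drop a perpendicular from $X_i$ to the cevian segment joining $P$ to that vertex of $\triangle ABC$; its foot is the point where incircle $i$ touches that segment, and it divides the segment into a piece of length $r_i\cot\frac\theta2$ adjacent to $P$ and a piece of length $r_i\cot\frac\varphi2$ adjacent to the vertex, while the whole segment has length $r/\sin\varphi$. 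Combined with the angle data above, this yields the six relations $r/r_i=\sin\varphi_i\left(\cot\frac{\theta_i}{2}+\cot\frac{\varphi_i}{2}\right)$ with $(\varphi_i,\theta_i)$ equal, for $i=1,\dots,6$, to $\left(\tfrac B2,90\degrees-\tfrac C2\right)$, $\left(\tfrac C2,90\degrees-\tfrac B2\right)$, $\left(\tfrac C2,90\degrees-\tfrac A2\right)$, $\left(\tfrac A2,90\degrees-\tfrac C2\right)$, $\left(\tfrac A2,90\degrees-\tfrac B2\right)$, $\left(\tfrac B2,90\degrees-\tfrac A2\right)$.

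Finally I would substitute these six relations into $\frac1{r_1}+\frac1{r_4}+\frac1{r_5}-\frac1{r_2}-\frac1{r_3}-\frac1{r_6}$. Using $\sin\varphi\cot\frac\varphi2=1+\cos\varphi$, the parts coming from the $\cot\frac{\varphi_i}{2}$ add up to $2\cos\frac A2-2\cos\frac C2=-4\sin\!\bigl(45\degrees-\tfrac B4\bigr)\sin\tfrac{A-C}4$; grouping the parts coming from the $\cot\frac{\theta_i}{2}$ by their cotangent and simplifying with identities such as $\sin\frac A2+\sin\frac B2=2\sin\!\bigl(45\degrees-\tfrac C4\bigr)\cos\tfrac{A-B}4$ and $2\cos^2\!\bigl(45\degrees-\tfrac B4\bigr)=1+\sin\tfrac B2$, those parts add up to $2\cos\!\bigl(45\degrees-\tfrac C4\bigr)\cos\tfrac{A-B}4-2\cos\!\bigl(45\degrees-\tfrac A4\bigr)\cos\tfrac{B-C}4+\frac{(1+\sin\frac B2)\sin\frac{A-C}4}{\sin(45\degrees-\frac B4)}$. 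Adding the two contributions and using $4\sin^2\!\bigl(45\degrees-\tfrac B4\bigr)=2-2\sin\tfrac B2$ to merge the terms carrying $\sin\tfrac{A-C}4$, one gets
\[
r\Bigl(\tfrac1{r_1}+\tfrac1{r_4}+\tfrac1{r_5}-\tfrac1{r_2}-\tfrac1{r_3}-\tfrac1{r_6}\Bigr)=2\cos\!\bigl(45\degrees-\tfrac C4\bigr)\cos\tfrac{A-B}4-2\cos\!\bigl(45\degrees-\tfrac A4\bigr)\cos\tfrac{B-C}4+\frac{(3\sin\frac B2-1)\sin\frac{A-C}4}{\sin(45\degrees-\frac B4)}.
\]
Now put $B=60\degrees$. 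Then $\sin\frac B2=\frac12$ and $\sin\!\bigl(45\degrees-\tfrac B4\bigr)=\sin30\degrees=\frac12$, so the last term becomes $\sin\tfrac{A-C}4$; and expanding the first two terms by product-to-sum, with $\tfrac{A+C}4=30\degrees$, their sum turns out to be $-\sin\tfrac{A-C}4$. Hence the right-hand side is $0$, which is the theorem.

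The step I expect to be the main obstacle is this last reduction — keeping track of the six trigonometric expressions and driving the sum-to-product and product-to-sum manipulations through without error. I would also remark that the right-hand side of the last display is not identically zero, so the hypothesis $B=60\degrees$ is genuinely needed; it enters only through the two coincidences $\sin\frac B2=\sin\!\bigl(45\degrees-\tfrac B4\bigr)=\frac12$.
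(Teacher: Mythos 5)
Your argument is correct, and it takes a genuinely different route from the paper. The paper's proof is computational: it uses the Law of Sines to express every segment length ($BD$, $PD$, $PB$, \dots) in terms of the half-angles, assembles the semiperimeters and areas to get each $r_i=K_i/s_i$, and then verifies that the combination $\frac1{r_1}+\frac1{r_4}+\frac1{r_5}-\frac1{r_2}-\frac1{r_3}-\frac1{r_6}$ vanishes after substituting $b=\pi/6$ by handing the resulting one-variable expression to a symbolic algebra system. You instead transplant the tangent-length argument of Lemma \ref{lemma:circumtrig} to the incenter: writing the cevian segment from $P$ to a vertex as the sum of the two tangent lengths $r_i\cot\frac{\theta_i}{2}+r_i\cot\frac{\varphi_i}{2}$ and using $PB=r/\sin\frac B2$ etc.\ gives the clean relation $r/r_i=\sin\varphi_i\bigl(\cot\frac{\theta_i}{2}+\cot\frac{\varphi_i}{2}\bigr)$, after which the alternating sum can be reduced by hand. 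I checked your angle assignments $(\varphi_i,\theta_i)$ against the paper's numbering and your sum-to-product reductions (the $\cot\frac{\varphi_i}{2}$ contributions giving $2\cos\frac A2-2\cos\frac C2$, the coefficient bookkeeping for the three distinct $\cot\frac{\theta_i}{2}$, and the final cancellation at $B=60^\circ$ via $\cos(30^\circ+u)-\cos(30^\circ-u)=-\sin u$ with $u=\frac{A-C}{4}$), and they all hold. What your approach buys is substantial: a human-verifiable proof that parallels the circumcenter case, plus an explicit closed form for the defect $r\bigl(\frac1{r_1}+\frac1{r_4}+\frac1{r_5}-\frac1{r_2}-\frac1{r_3}-\frac1{r_6}\bigr)$ valid for \emph{all} triangles, which shows precisely how the hypothesis $\angle ABC=60^\circ$ enters and could be reused to attack the paper's open questions about general incenter relations. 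The only caveat is that the final trigonometric reduction is delicate enough that it deserves to be written out in full rather than sketched, but there is no gap in the plan.
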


\bigskip
\begin{figure}[h!]
\centering
\includegraphics[width=0.7\linewidth]{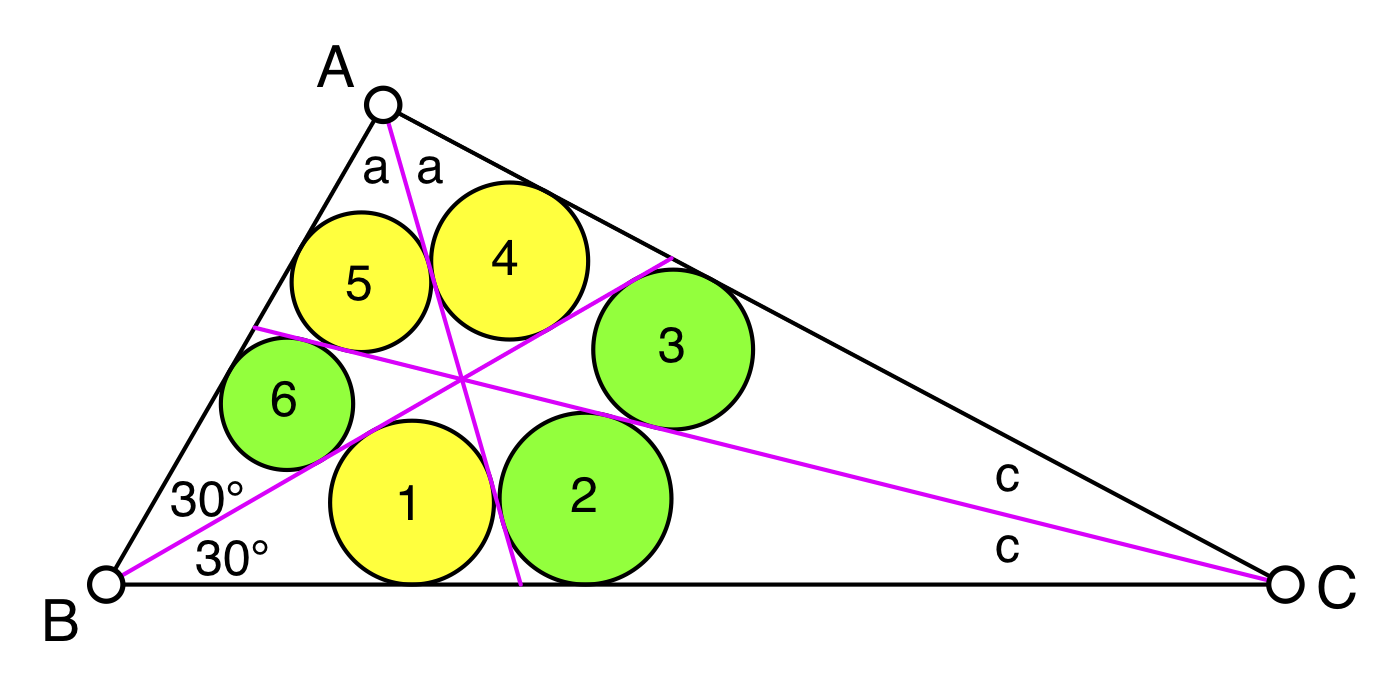}
\caption{$\frac{1}{r_1}+\frac{1}{r_4}+\frac{1}{r_5}=\frac{1}{r_2}+\frac{1}{r_3}+\frac{1}{r_6}$}
\label{fig:incenter60}
\end{figure}

\begin{proof}
Let $\angle BAD=\angle DAC=a$, $\angle ABE=\angle EBC=b$, and $\angle ACF=\angle FCB=c$,
as shown in Figure \ref{fig:incenterProof}. Note that $a+b+c=\pi/2$.
By the Extended Law of Sines in $\triangle ABC$, $AB/\sin 2c=2R$, where $R$ is the radius of the circumcircle of $\triangle ABC$.
Since the result we want to prove is invariant under scaling, without loss of generality, we may assume that $R=1/2$. Thus $AB=\sin 2c$. In the same manner, we can find $AC$ and $BC$.
We get the following.

$$
AB=\sin 2c,\quad BC=\sin 2a, \quad CA=\sin 2b.
$$

\bigskip
\begin{figure}[h!]
\centering
\includegraphics[width=0.5\linewidth]{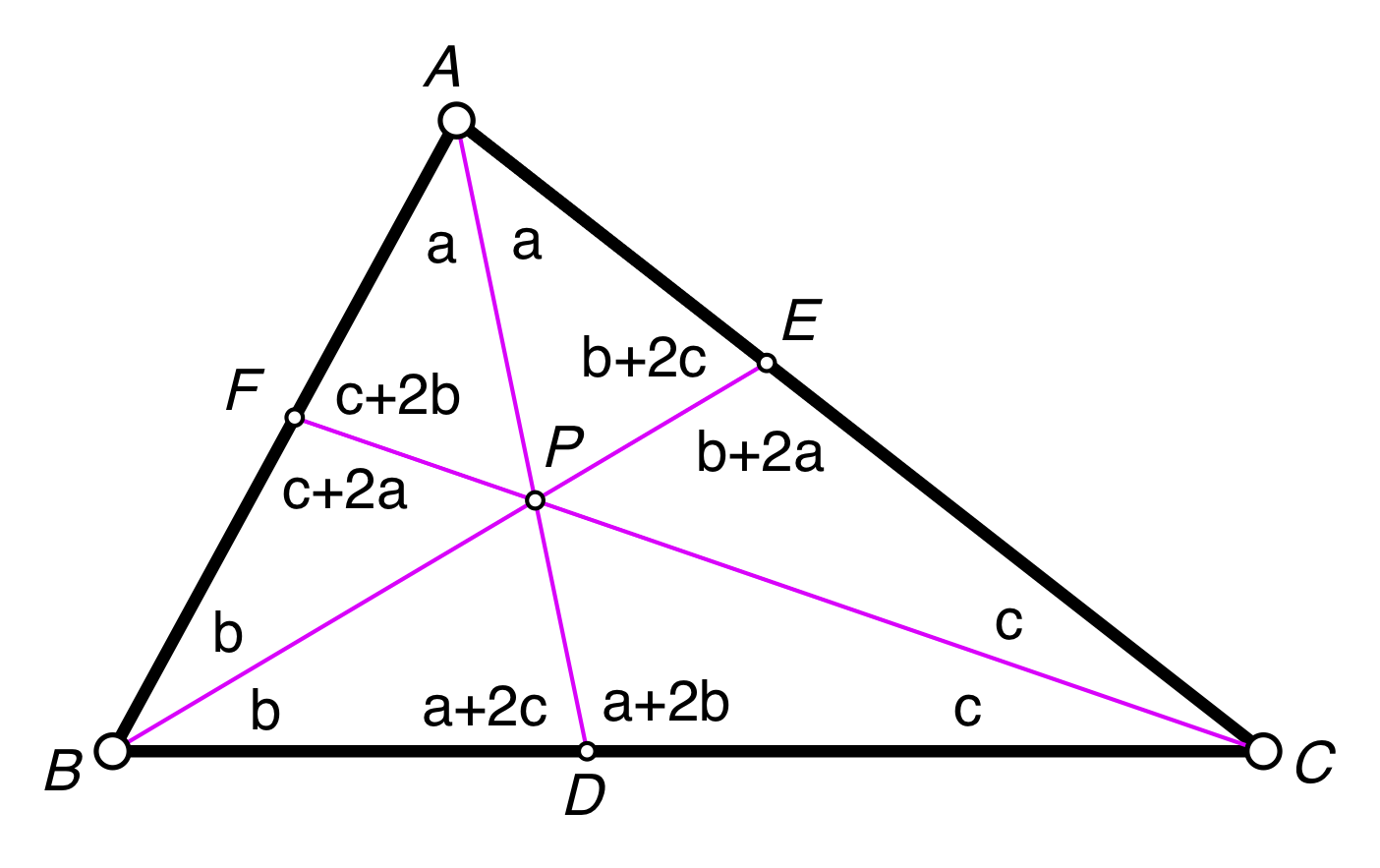}
\caption{}
\label{fig:incenterProof}
\end{figure}

Applying the Law of Sines to $\triangle ABD$ gives $BD/\sin a=AB/\sin(a+2c)$. This allows us to compute $BD$. In a similar manner, we get the following.

$$
\begin{aligned}
BD=\frac{\sin a\sin 2c}{\sin(a+2c)},\quad&CE=\frac{\sin b\sin 2a}{\sin(b+2a)},&AF=\frac{\sin c\sin 2b}{\sin(c+2b)},\\
CD=\frac{\sin a\sin 2b}{\sin(a+2b)},\quad&AE=\frac{\sin b\sin 2c}{\sin(b+2c)},&BF=\frac{\sin c\sin 2a}{\sin(c+2a)}.
\end{aligned}
$$

\bigskip
Note that $\angle BPD=a+b$. Applying the Law of Sines to $\triangle BPD$ allows us to compute the values of $PB$ and $PD$. In the same way, we can compute $PC$, $PE$, $PA$, and $PF$.
We get the following.

$$
\begin{aligned}
PA=\frac{\sin c\sin 2b}{\sin(c+a)},\quad&PD=\frac{\sin a\sin b\sin 2c}{\sin(a+b)\sin(a+2c)},\\
PB=\frac{\sin a\sin 2c}{\sin(a+b)},\quad&PE=\frac{\sin b\sin c\sin 2a}{\sin(b+c)\sin(b+2a)},\\
PC=\frac{\sin b\sin 2a}{\sin(b+c)},\quad&PF=\frac{\sin c\sin a\sin 2b}{\sin(c+a)\sin(c+2b)}.
\end{aligned}
$$

\medskip
We now have expressions for the length of every line segment in the figure in terms of $a$, $b$, and $c$. Thus, the perimeters of all the triangles are known and we have expressed each of the $s_i$
in terms of $a$, $b$, and $c$. The areas of the triangles can also be found. For example, $K_1=\frac{1}{2}PB\cdot BD\sin b$. Knowing all the $s_i$ and $K_i$ lets us find the values of all
the $r_i$, since $r_i=K_i/s_i$.

\medskip
We can plug these values for the $r_i$ into the expression
$$\frac{1}{r_1}+\frac{1}{r_4}+\frac{1}{r_5}-\left(\frac{1}{r_2}+\frac{1}{r_3}+\frac{1}{r_6}\right).$$
Letting $a=\pi/2-b-c$ and $b=\pi/6$ then gives us an expression with $c$ as the only variable.
Simplifying this expression (using a symbolic algebra system), we find that the result is 0, thus proving our theorem.
\end{proof}

\begin{theorem}
\label{thm:incenter120}
If $P$ is the incenter of $\triangle ABC$ and $\angle ABC=120\degrees$ (Figure \ref{fig:incenter120}), then
$$r_1r_2r_3+r_3r_4r_5+r_3r_4r_6=r_1r_3r_4+r_2r_3r_4+r_4r_5r_6.$$
\end{theorem}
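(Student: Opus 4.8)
The plan is to follow the computational template of the proof of Theorem~\ref{thm:incenter60}. Keep the notation of Figure~\ref{fig:incenterProof}: set $\angle BAD=\angle DAC=a$, $\angle ABE=\angle EBC=b$, $\angle ACF=\angle FCB=c$, so $a+b+c=\pi/2$, and observe that the hypothesis $\angle ABC=120\degrees$ is precisely $b=\pi/3$ (hence $a+c=\pi/6$, with $0<c<\pi/6$). Since the claimed identity is homogeneous of degree~$3$ in the radii, it is scale-invariant, so we may normalize the circumradius of $\triangle ABC$ to $1/2$; then $AB=\sin 2c$, $BC=\sin 2a$, $CA=\sin 2b$. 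Applying the Law of Sines in the six sub-triangles $ABD$, $ACD$, $BCE$, $\dots$ and then in $\triangle BPD$, $\triangle CPE$, $\triangle APF$, $\dots$ gives exactly the closed forms for $BD,CD,CE,AE,AF,BF$ and for $PA,PB,PC,PD,PE,PF$ already recorded in the proof of Theorem~\ref{thm:incenter60}; no new geometry is needed.

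From these lengths, each semiperimeter is a half-sum of three known segments (e.g.\ $2s_1=PB+PD+BD$), and each area is half the product of two adjacent sides and the sine of the relevant half-angle of $\triangle ABC$ (e.g.\ $K_1=\tfrac{1}{2}\,PB\cdot BD\sin b$, $K_2=\tfrac{1}{2}\,PC\cdot CD\sin c$, and so on around the figure). Hence every $r_i=K_i/s_i$ is an explicit function of $a,b,c$. Substituting $b=\pi/3$ and $a=\pi/6-c$ reduces
$$
r_1r_2r_3+r_3r_4r_5+r_3r_4r_6-\bigl(r_1r_3r_4+r_2r_3r_4+r_4r_5r_6\bigr)
$$
to a function of the single variable $c$, and the theorem amounts to showing that this function is identically $0$; equivalently, after clearing the common denominator, that a certain trigonometric polynomial in $c$ vanishes.

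The only real obstacle is this final simplification. In Theorem~\ref{thm:incenter60} the target involved only the reciprocals $1/r_i=s_i/K_i$, which are comparatively tame; here each $r_i$ enters as a genuine ratio whose numerator and denominator are built from several sines, so forming the six triple products and combining them over a common denominator produces a sizeable expression. We verify the cancellation with a symbolic algebra system — checking first at a random value of $c$ to confirm the identity numerically, then confirming symbolically that the simplified result is $0$ — exactly as in the proof of Theorem~\ref{thm:incenter60}.
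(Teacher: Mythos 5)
Your proposal is correct and follows exactly the route the paper intends: the published proof simply states that the theorem "can be proven using the same procedure that was used to prove Theorem~\ref{thm:incenter60}" and omits the details, and your specialization $b=\pi/3$, $a=\pi/6-c$ together with the recorded length formulas and a symbolic verification is precisely that procedure, carried out.
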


This can also be written as
$\displaystyle r_1+r_2+\frac{r_5r_6}{r_3}=r_5+r_6+\frac{r_1r_2}{r_4}$.

\bigskip
\begin{figure}[h!]
\centering
\includegraphics[width=0.7\linewidth]{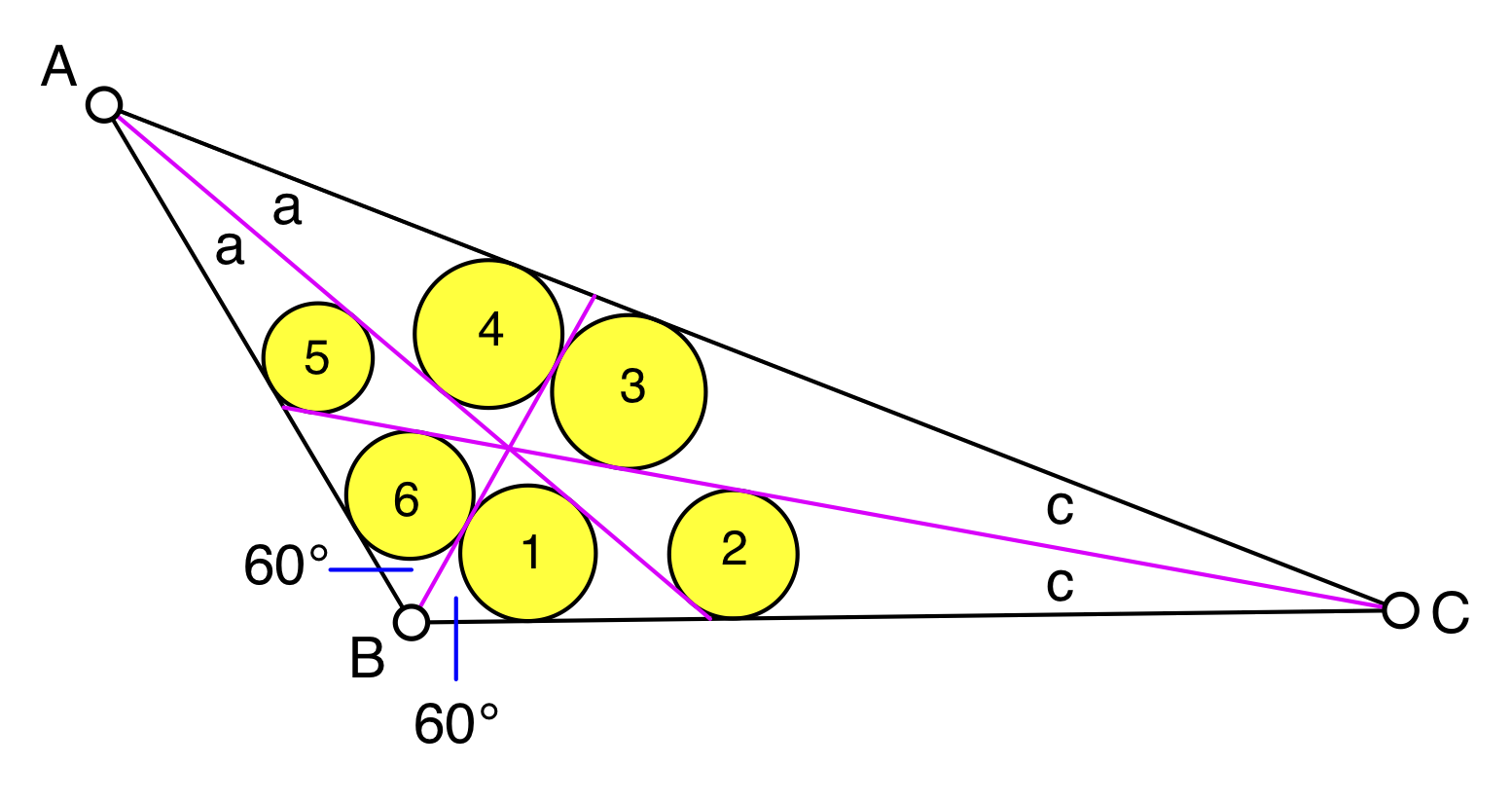}
\caption{$r_1r_2r_3+r_3r_4r_5+r_3r_4r_6=r_1r_3r_4+r_2r_3r_4+r_4r_5r_6$}
\label{fig:incenter120}
\end{figure}

\begin{proof}
This theorem can be proven using the same procedure that was used to prove Theorem \ref{thm:incenter60}.
The details are omitted.
\end{proof}

\begin{open}
Is there a simple relationship between the $r_i$ that holds for all triangles when $P$
is the incenter?
\end{open}

Note that there are two independent variables, $a$ and $b$, and six equations representing the values of the $r_i$. Thus, variables $a$ and $b$ can be eliminated resulting in an equation relating the $r_i$. Since there are so many more equations than variables, multiple relationships can be found. For example, in the $30\degrees$--$60\degrees$--$90\degrees$ right triangle, we have a number of simple relationships between the $r_i$, as shown by the following theorem.

\begin{theorem}
\label{thm:incenter306090}
If $P$ is the incenter of $\triangle ABC$ and $\angle ABC=30\degrees$ and $\angle ACB=60\degrees$, then the $r_i$ are related to each other by each of the following equations.
$$
\begin{aligned}
\frac{2}{r_1^4}+\frac{8}{r_3^4}+\frac{6}{r_4^4}+\frac{5}{r_5^4}&=\frac{14}{r_2^4}+\frac{20}{r_6^4},\\
\frac{3}{r_1^2}+\frac{3}{r_2^2}+\frac{3}{r_4^2}&=\frac{3}{r_3^2}+\frac{2}{r_5^2}+\frac{3}{r_6^2},\\
\frac{2}{r_1}+\frac{2}{r_2}+\frac{1}{r_5}+\frac{1}{r_6}&=\frac{3}{r_3}+\frac{2}{r_4},\\
3r_1+5r_3+55r_4+22r_6&=4r_2+75r_5,\\
2r_1r_3+3r_2r_4+9r_5r_1+9r_6r_2&=27r_3r_5+r_4r_6.
\end{aligned}
$$
\end{theorem}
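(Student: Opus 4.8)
The plan is to reuse, essentially verbatim, the computational engine assembled in the proof of Theorem~\ref{thm:incenter60}. There, writing $a$, $b$, $c$ for the half-angles at $A$, $B$, $C$ (so that $a+b+c=\pi/2$) and normalizing the circumradius to $1/2$, we obtained closed-form expressions purely in terms of $a,b,c$ for every segment in the figure: the side lengths $AB,BC,CA$; the distances $BD,CD,CE,AE,AF,BF$ from the vertices to the cevian feet; and the sub-cevian lengths $PA,\dots,PF$. From these data each semiperimeter $s_i$ is an explicit trigonometric expression, each area is obtained from a formula of the type $K_1=\tfrac12\,PB\cdot BD\sin b$, and hence each $r_i=K_i/s_i$ becomes an explicit function of $a$, $b$, $c$.

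For the case at hand I would then specialize the angles. The hypothesis $\angle ABC=30\degrees$ forces $b=\pi/12$, the hypothesis $\angle ACB=60\degrees$ forces $c=\pi/6$, and consequently $a=\pi/2-b-c=\pi/4$ (so indeed $\angle BAC=90\degrees$). Substituting the numerical triple $(a,b,c)=(\pi/4,\pi/12,\pi/6)$ turns every $r_i$ into a concrete real algebraic number: a rational combination of sines and cosines of integer multiples of $15\degrees$, hence an element of a fixed radical extension of $\mathbb{Q}$. I would record the six exact values $r_1,\dots,r_6$ in a single table and then substitute them into each of the five claimed relations, checking in each case that the left side equals the right side. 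Since all the quantities involved are exact algebraic numbers, each check is a finite exact computation — carried out with a symbolic algebra system, exactly as in the proof of Theorem~\ref{thm:incenter60} — and the five identities are verified one at a time.

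The main obstacle is not a single hard deduction but rather bookkeeping together with the more delicate question of where the five equations come from in the first place. As the remark preceding the theorem observes, with only two free parameters and six inradii there is a whole lattice of polynomial relations among the $r_i$ in this triangle; the particular clean ones displayed were located by searching for small-integer-coefficient combinations of the $1/r_i$, $1/r_i^2$, $1/r_i^4$, $r_i$, and $r_ir_j$ that vanish identically at $(a,b,c)=(\pi/4,\pi/12,\pi/6)$. For the proof proper the equations are handed to us, so the only genuine work is the careful evaluation of the six inradii at that angle triple and the five substitutions; I would organize the write-up exactly as the author does elsewhere, displaying the table of exact $r_i$ values and then dispatching the five identities, omitting the routine simplifications.
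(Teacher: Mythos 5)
Your proposal is correct and follows essentially the same route as the paper: the paper's proof simply lists the six exact radical values of the $r_i$ for the $30\degrees$--$60\degrees$--$90\degrees$ case and verifies the five identities by substitution with computer simplification, which is exactly your plan (you additionally spell out that the values come from specializing the trigonometric formulas of Theorem \ref{thm:incenter60} at $(a,b,c)=(\pi/4,\pi/12,\pi/6)$, a step the paper leaves implicit). No gaps.
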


\begin{proof}
For this triangle, the values of the $r_i$ are as follows.
$$
\begin{aligned}
r_1&=\frac{1}{4} \left(\sqrt2-1\right) \left(\sqrt3-1\right),\\
r_2&=\frac{1}{4}\left(-10-7\sqrt2+6\sqrt3+4\sqrt6\right),\\
r_3&=\frac{1}{4}\left(9-7\sqrt2-5\sqrt3+4\sqrt6\right),\\
r_4&=\frac{1}{8}\left(-4-\sqrt2+2\sqrt3+\sqrt6\right),\\
r_5&=\frac{1}{24}\left(-3\sqrt2+2\sqrt3+\sqrt6\right),\\
r_6&=\frac{1}{4}\left(1+\sqrt3-\sqrt6\right).\\
\end{aligned}
$$
These values can be substituted into the stated equations to verify the results
(using computer simplification, as necessary).
\end{proof}

Here is an approach that might be used to find the general relationship between the $r_i$
when $P$ is the incenter. To avoid fractions, we will replace $a$, $b$, and $c$ from Figure \ref{fig:incenterProof} by $2a$, $2b$, and $2c$ to get Figure \ref{fig:trig}. We can express $PB$ as the sum of two lengths using circles 6 and 1 in two different ways. Equating these expressions gives the following equation.

\begin{equation}
\label{eq:trig}
\frac{r_6}{r_1}=\frac{\cot b+\cot(a+b)}{\cot b+\cot(b+c)}.
\end{equation}

\bigskip
\begin{figure}[h!]
\centering
\includegraphics[width=0.9\linewidth]{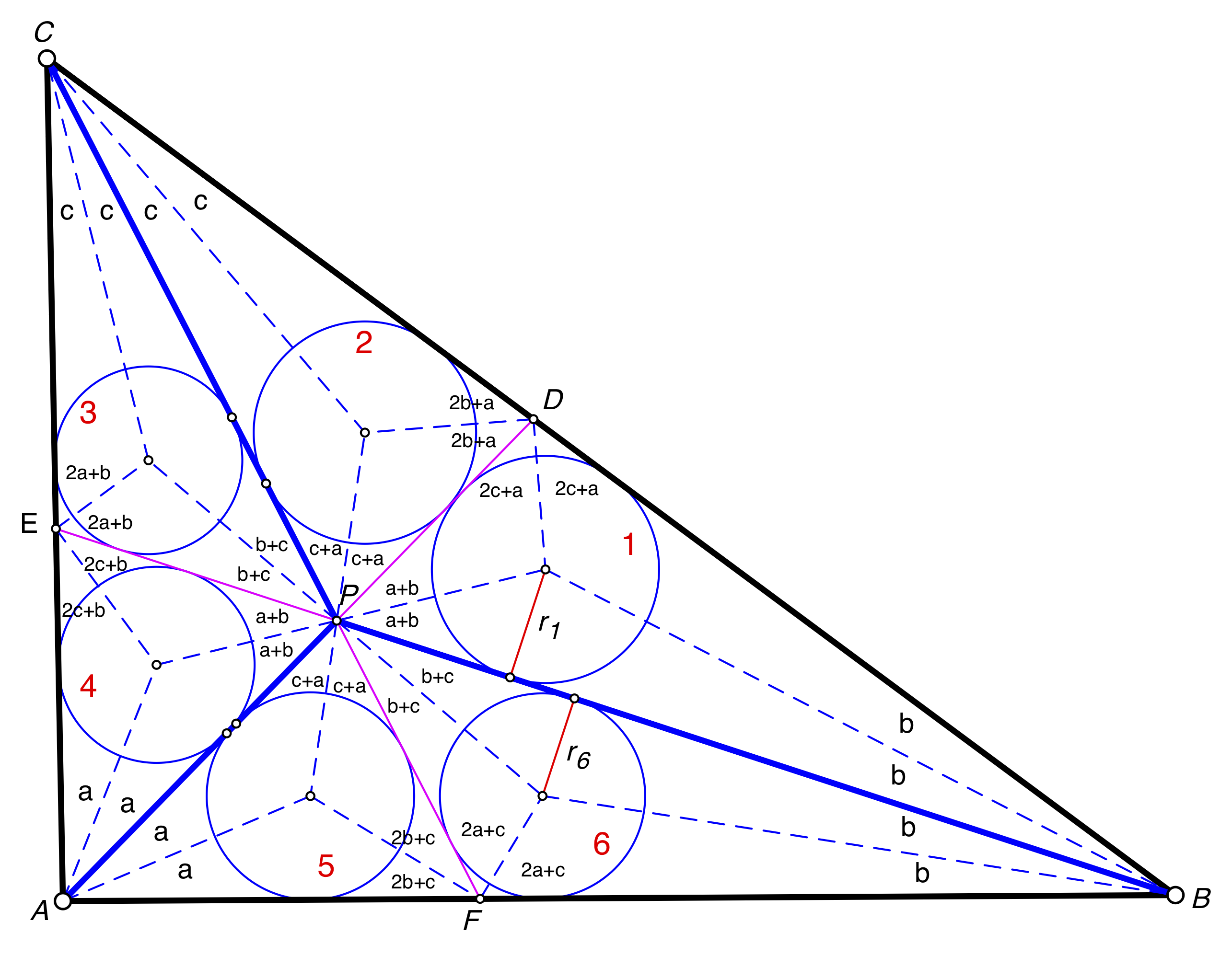}
\caption{}
\label{fig:trig}
\end{figure}

Since $4a+4b+4c=180\degrees$, $c=45\degrees-a-b$. Substitute this value of $c$
into equation~(\ref{eq:trig}). Then use the addition formula for cotangent,
$$\cot(x+y)=\frac{\cot x\cot y-1}{\cot x+\cot y},$$
to write all trigonometric expressions in terms of $\cot a$ and $\cot b$.

\medskip
In a similar manner we can form two other equations for $r_2/r_3$ and $r_4/r_5$.
This gives us the following three equations for $u=r_6/r_1$, $v=r_2/r_3$, and $w=r_4/r_5$
in terms of the two unknowns $C_a=\cot a$ and $C_b=\cot b$.

$$
\begin{aligned}
u&=\frac{(C_a-1)(C_b^2+2C_aC_b-1)}{(C_a+C_b)(1+C_a-C_b+C_aC_b)},\\
v&=\frac{(C_b-1)(C_bC_a^2-2C_a-C_b)}{(C_a-1)(C_aC_b^2-2C_b-C_a)},\\
w&=\frac{(C_a+C_b)(1-C_a+C_b+C_aC_b)}{(C_b-1)(C_a^2+2C_aC_b-1)}.
\end{aligned}
$$

Clearing fractions gives us three polynomial equations in the variables $C_a$ and $C_b$.
In theory, we should be able to eliminate $C_a$ and $C_b$ from these three equations,
leaving us with a single equation relating $u$, $v$, and $w$. This equation would
be the desired relationship between the $r_i$. I have not been able to perform
this elimination.

\bigskip
The following may be a simpler question.
 
\begin{open}
Find the relationship between $r_1$, $r_2$, and $r_3$ that holds for all triangles when $P$
is the incenter.
\end{open}

Such a relationship should exist because we can express each of $r_1$, $r_2$, and $r_3$
in terms of $a$ and $b$. This would give us three equations in two unknowns. In theory,
we should be able to eliminate $a$ and $b$ from these three equations, giving us a single
equation relating $r_1$, $r_2$, and $r_3$.

\bigskip 
\section{Other Points}

Next, we will consider other points inside triangle $ABC$.
We start with an example where there is a linear relationship between the $r_i$.

\begin{theorem}
\label{thm:linear}
If $P$ is a point inside $\triangle ABC$, and $\angle ABP=10\degrees$, $\angle PBC=30\degrees$, $\angle BCP=80\degrees$, and $\angle PCA=20\degrees$ (Figure \ref{fig:linear}), then
$$5r_1+6r_2+r_4=r_3+3r_5+15r_6.$$
\end{theorem}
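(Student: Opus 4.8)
The plan is to reduce the statement to an explicit trigonometric identity, in the same spirit as the proofs of Theorems~\ref{thm:incenter60} and~\ref{thm:incenter306090}. First I would pin down the configuration. The hypotheses give $\angle ABC=40\degrees$ and $\angle ACB=100\degrees$, hence $\angle BAC=40\degrees$; since only the angles at $B$ and $C$ are prescribed, $P$ is determined by the cevians from $B$ and $C$, and the way the cevian $AP$ splits $\angle A$ is then forced by trigonometric Ceva:
$$\frac{\sin\angle BAP}{\sin\angle PAC}=\frac{\sin 80\degrees\,\sin 10\degrees}{\sin 20\degrees\,\sin 30\degrees}=\frac{\tfrac12\sin 20\degrees}{\tfrac12\sin 20\degrees}=1,$$
so $\angle BAP=\angle PAC=20\degrees$; that is, $AP$ bisects $\angle A$. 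In particular the whole picture is determined up to similarity and every angle in it is a multiple of $10\degrees$.

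Next I would record the angles of the six sub-triangles. Writing $D,E,F$ for the feet of the cevians from $A,B,C$, so that (as in Section~2 and the centroid lemmas) triangle $1=\triangle PBD$, $2=\triangle PDC$, $3=\triangle PCE$, $4=\triangle PEA$, $5=\triangle PAF$, $6=\triangle PFB$, a routine angle chase gives, in the order (angle at $P$, then the two remaining angles), the values $(30\degrees,30\degrees,120\degrees)$, $(40\degrees,60\degrees,80\degrees)$, $(110\degrees,50\degrees,20\degrees)$, $(30\degrees,130\degrees,20\degrees)$, $(40\degrees,120\degrees,20\degrees)$, $(110\degrees,60\degrees,10\degrees)$. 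Since the claimed relation is scale-invariant I may normalize the circumradius of $\triangle ABC$ to $\tfrac12$; then repeated application of the Law of Sines produces all the cevian pieces, and in particular the pleasant values $PA=PC=\sin 20\degrees$ and $PB=2\sin 20\degrees\cos 10\degrees$. Using the inradius formula $r=\ell/(\cot\tfrac{\beta}{2}+\cot\tfrac{\gamma}{2})$ for a triangle with a side $\ell$ and adjacent angles $\beta,\gamma$ (the same tangent-length bookkeeping as in Lemma~\ref{lemma:circumtrig}), each $r_i$ then has a closed form, e.g.\ $r_1=\tfrac12 PB\tan 15\degrees$, $r_2=PC/(\cot 20\degrees+\cot 40\degrees)$, $r_3=PC/(\cot 55\degrees+\cot 10\degrees)$, $r_4=PA/(\cot 15\degrees+\cot 10\degrees)$, $r_5=PA/(\cot 20\degrees+\cot 10\degrees)$, $r_6=PB/(\cot 55\degrees+\cot 5\degrees)$.

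Finally I would substitute these into $5r_1+6r_2+r_4-(r_3+3r_5+15r_6)$; the common factor $\sin 20\degrees$ present in every $r_i$ pulls out, leaving a pure trigonometric identity among values at multiples of $5\degrees$, which can be checked by rewriting everything in terms of a primitive $72$nd root of unity, or simply by a symbolic algebra system exactly as in the theorems above. This last verification is the only real obstacle: there is no slick cancellation to be expected, the integer coefficients $5,6,1,1,3,15$ being evidently the output of an elimination, so the identity must be confirmed by honest but finite and routine computation. (A numerical check already shows both sides equal $1.00858\ldots$ under the above normalization.)
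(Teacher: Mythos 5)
Your proposal is correct, and the numerical check ($1.00858\ldots$ on both sides) matches what the closed forms give. The overall strategy is the same as the paper's — reduce the claim to an explicit trigonometric identity and hand the final verification to a symbolic algebra system — but your route through the computation is genuinely different and cleaner. The paper keeps the four angles $a,b,c,d$ general, builds every segment length by a cascade of the Law of Sines, the ratio form of Ceva's theorem for $BD/CD$, the Law of Cosines for $PA$, and Menelaus for $PD$, and only then forms $r_i=K_i/s_i$ and substitutes $a=10\degrees$, $b=30\degrees$, $c=80\degrees$, $d=20\degrees$. You instead substitute the specific angles at the outset, use trigonometric Ceva to observe that $\sin 80\degrees\sin 10\degrees=\sin 20\degrees\sin 30\degrees$ forces $AP$ to bisect $\angle A$ (so $\angle BAP=\angle PAC=20\degrees$ and $PA=PC=\sin 20\degrees$ under the normalization $R=\tfrac12$), and then read off each inradius from the tangent-length formula $r=\ell/(\cot\tfrac{\beta}{2}+\cot\tfrac{\gamma}{2})$ — the same bookkeeping as in Lemma \ref{lemma:circumtrig} — rather than via $K/s$. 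Your angle chase and all six closed forms for the $r_i$ check out. What your version buys is that the final identity lives entirely in cotangents of multiples of $5\degrees$ with no square roots (the paper's $PA$ carries one from the Law of Cosines), which makes the concluding CAS verification more transparent; what it gives up is the generality of the paper's setup, which the author reuses verbatim for Theorems \ref{thm:6thinvIncircles} and \ref{thm:5invIncircles}. Both proofs ultimately rest on a machine-checked identity, so neither is more rigorous than the other at the final step.
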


\bigskip
\begin{figure}[h!]
\centering
\includegraphics[width=0.6\linewidth]{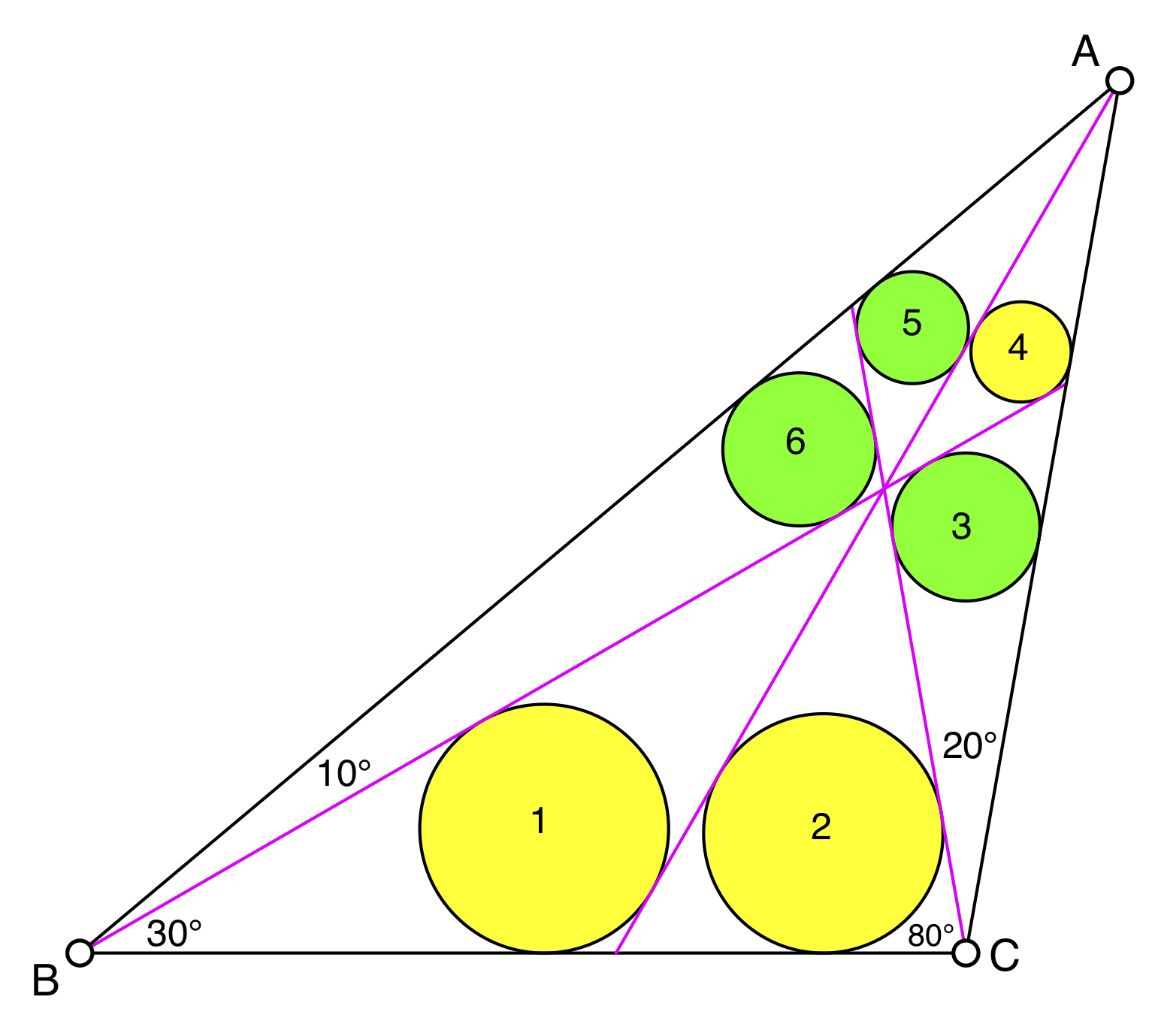}
\caption{$5r_1+6r_2+r_4=r_3+3r_5+15r_6$}
\label{fig:linear}
\end{figure}

\begin{proof}
We follow the same general procedure that was used in the proof of Theorem \ref{thm:incenter60}.
Let $\angle ABE=a$, $\angle EBC=b$, $\angle BCF=c$, and $\angle FCA=d$,
as shown in Figure \ref{fig:genProof}. Note that
$\angle BAC=\pi-a-b-c-d$, $\angle AFC=a+b+c$, $\angle AEB=b+c+d$, and $\angle EPC=b+c$.

\medskip
Without loss of generality, we may assume that $R=1/2$, where $R$ is the radius of the circumcircle of $\triangle ABC$.
By the Extended Law of Sines we get the following.

$$
AB=\sin(c+d),\quad AC=\sin(a+b),\quad BC=\sin(a+b+c+d).
$$

\bigskip
\begin{figure}[h!]
\centering
\includegraphics[width=0.5\linewidth]{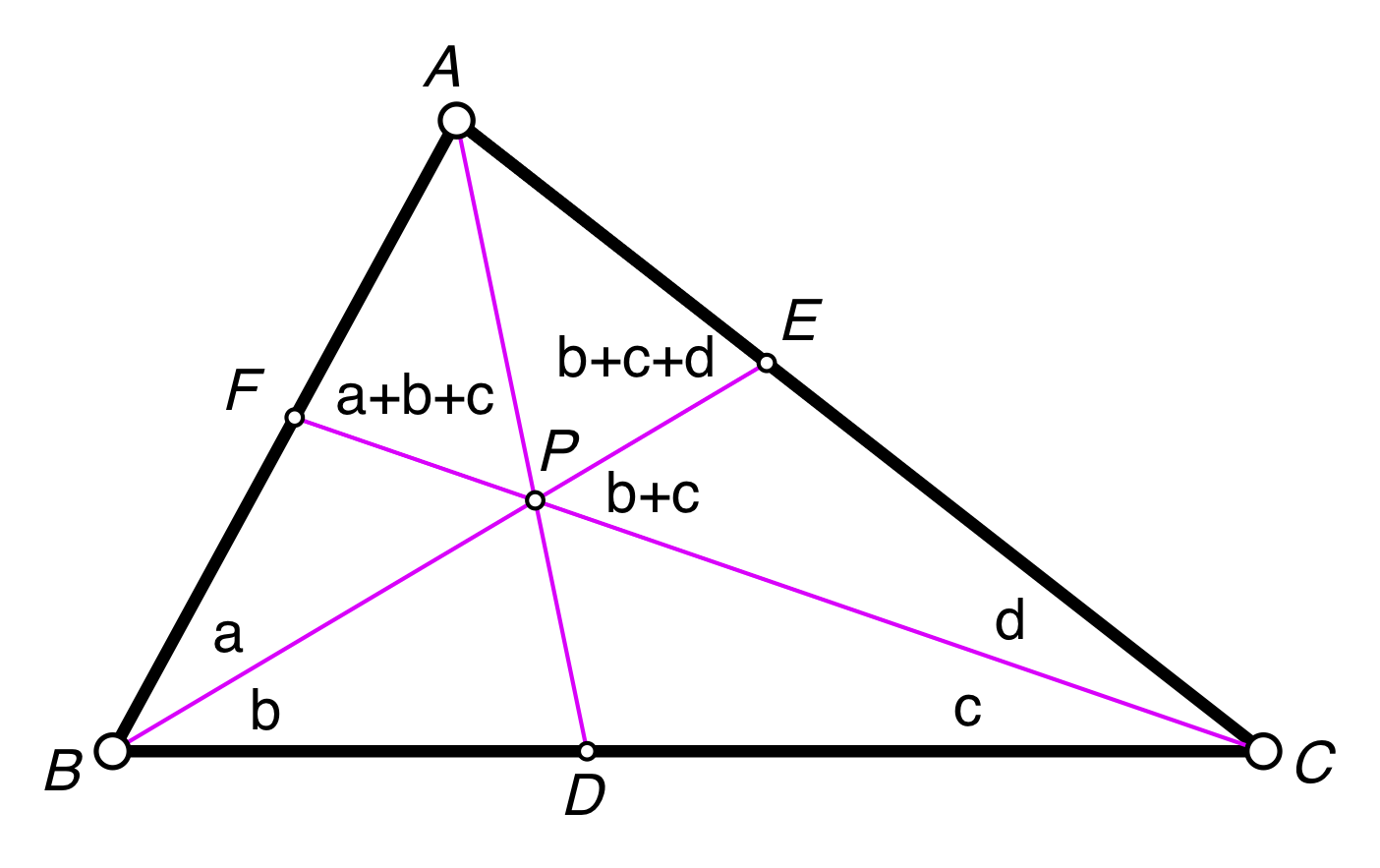}
\caption{}
\label{fig:genProof}
\end{figure}

\goodbreak
We then use the Law of Sines to get the following.

$$
\begin{aligned}
AF=\frac{\sin(a+b)\sin d}{\sin(a+b+c)},\quad&BF=\frac{\sin(a+b+c+d)\sin c}{\sin(a+b+c)},\\
AE=\frac{\sin(c+d)\sin a}{\sin(b+c+d)},\quad&CE=\frac{\sin(a+b+c+d)\sin b}{\sin(b+c+d)}.
\end{aligned}
$$

Applying the Law of Sines again gives the following.

$$
\begin{aligned}
PB=\frac{\sin(a+b+c+d)\sin c}{\sin (b+c)},\quad&PE=\frac{CE\sin d}{\sin(b+c)},\\
PC=\frac{\sin(a+b+c+d)\sin b}{\sin(b+c)},\quad&PF=\frac{BF\sin a}{\sin(b+c)}.
\end{aligned}
$$

By Ceva's Theorem, $\frac{BD}{CD}=\frac{BF\cdot AE}{AF\cdot CE}$.
This gives the following.

$$
\begin{aligned}
BD&=\frac{BF\cdot AE\cdot BC}{BF\cdot AE+AF\cdot CE},\\
CD&=\frac{AF\cdot CE\cdot BC}{BF\cdot AE+AF\cdot CE}.
\end{aligned}
$$

Length $PA$ is calculated using the Law of Cosines in triangle $APB$.
We get the following.

$$
PA=\sqrt{AB^2+PB^2-2\cdot AB\cdot PB\cdot \cos a}.
$$

To find the length of $PD$ without introducing  another square root, we can apply Menelaus' Theorem to
transversal $BPE$ in $\triangle ADC$. We get the following.

$$
PD=\frac{PA\cdot BD\cdot CE}{BC\cdot AE}.
$$

We now have expressions for the length of every line segment in the figure in terms of $a$, $b$, $c$, and $d$. We can therefore calculate all the $s_i$, $K_i$, and $r_i$.

\medskip
We can plug these values for the $r_i$ into the expression
$$5r_1+6r_2+r_4-\left(r_3+3r_5+15r_6\right).$$
Letting $a=10\degrees$, $b=30\degrees$, $c=80\degrees$, and $d=20\degrees$ gives us an expression with no variables.
Simplifying this expression (using a symbolic algebra system), we find that the result is 0, thus proving our theorem.
\end{proof}

Sometimes the relationship between the $r_i$ is more regular as in the following two theorems. The proofs are similar to the proof of Theorem \ref{thm:linear}.
When the formulas for the lengths of the radii previously found are applied to the equation to be proved, the result is a trigonometric equation that can be proven to be an identity using symbolic algebra computation. The details are omitted.

\begin{theorem}
\label{thm:6thinvIncircles}
If $P$ is a point inside $\triangle ABC$, and $\angle PBA$, $\angle PBC$, $\angle PCB$, and $\angle PCA$ are as shown in Figure \ref{fig:6invIncirclesD}, then
$$\frac{1}{r_1}+\frac{1}{r_3}+\frac{1}{r_4}=\frac{1}{r_2}+\frac{1}{r_5}+\frac{1}{r_6}.$$
\end{theorem}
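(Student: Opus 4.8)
The plan is to run the computation from the proof of Theorem~\ref{thm:linear} again, changing only the data substituted at the very end. Keeping the notation of Figure~\ref{fig:genProof}, I would set $\angle ABE=a$, $\angle EBC=b$, $\angle BCF=c$, $\angle FCA=d$, so that $\angle BAC=\pi-a-b-c-d$, $\angle AFC=a+b+c$, $\angle AEB=b+c+d$, and $\angle EPC=b+c$; and I would normalize the circumradius of $\triangle ABC$ to $R=1/2$, which is harmless since the claimed identity is homogeneous of degree $-1$ in lengths. The Extended Law of Sines then gives $AB=\sin(c+d)$, $AC=\sin(a+b)$, $BC=\sin(a+b+c+d)$.

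Next I would rebuild every segment of the figure in terms of $a,b,c,d$, exactly as in Theorem~\ref{thm:linear}: the Law of Sines in $\triangle AFC$ and $\triangle BFC$ gives $AF$ and $BF$; in $\triangle ABE$ and $\triangle BCE$ it gives $AE$ and $CE$; a further application at $P$ gives $PB$, $PC$, $PE$, $PF$; Ceva's Theorem gives $BD$ and $CD$; the Law of Cosines in $\triangle APB$ gives $PA$, introducing the one unavoidable square root $\sqrt{D}$; and Menelaus' Theorem on transversal $BPE$ of $\triangle ADC$ gives $PD$ as a rational multiple of $PA$. Each of the six triangles now has a known base and two known flanking sides, so every $s_i$ and every $K_i=\tfrac12(\text{side})(\text{side})\sin(\text{included angle})$ is explicit, and hence so is every $r_i=K_i/s_i$.

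Finally I would substitute these into
$$\frac{1}{r_1}+\frac{1}{r_3}+\frac{1}{r_4}-\left(\frac{1}{r_2}+\frac{1}{r_5}+\frac{1}{r_6}\right),$$
impose the angle constraints indicated in Figure~\ref{fig:6invIncirclesD}, and simplify with a symbolic algebra system; the theorem follows once this reduces to $0$. The obstacle is entirely computational. Because the identity involves \emph{reciprocals} of the $r_i$, the common denominator is considerably larger than in Theorem~\ref{thm:linear}; moreover $\sqrt{D}$ enters $PA$ and $PD$, hence $r_1,r_2,r_4,r_5$ (but not $r_3$ or $r_6$), so each of $1/r_1,1/r_2,1/r_4,1/r_5$ has the form $(\text{rational in the trigonometric data})+(\text{rational})/\sqrt{D}$. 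The displayed expression therefore separates into a radical-free part and a coefficient of $1/\sqrt{D}$, and one must check that \emph{both} vanish; this rationalization and cancellation, rather than any geometric difficulty, is the heart of the matter. Organizing the six terms by cevian segment — each of $PA,PB,PC,PD,PE,PF$ borders exactly two of the six triangles, and triangles $i$ and $i+3$ subtend equal (vertical) angles at $P$ — should help keep the simplification within reach of a CAS. A purely synthetic argument in the style of Theorem~\ref{thm:Orthocenter} seems out of reach, since for a general cevian point the six triangles are not pairwise similar.
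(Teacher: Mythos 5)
Your proposal follows exactly the paper's own route: the author proves this theorem by reusing the segment-length formulas from the proof of Theorem~\ref{thm:linear} (Law of Sines, Ceva, Law of Cosines for $PA$, Menelaus for $PD$), substituting the specific angles of Figure~\ref{fig:6invIncirclesD}, and verifying with a computer algebra system that the difference of the two sides reduces to zero. Your additional remarks on handling the $\sqrt{D}$ term are a sensible elaboration of the omitted computational details, but the method is the same.
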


\bigskip
\begin{figure}[h!]
\centering
\includegraphics[width=0.5\linewidth]{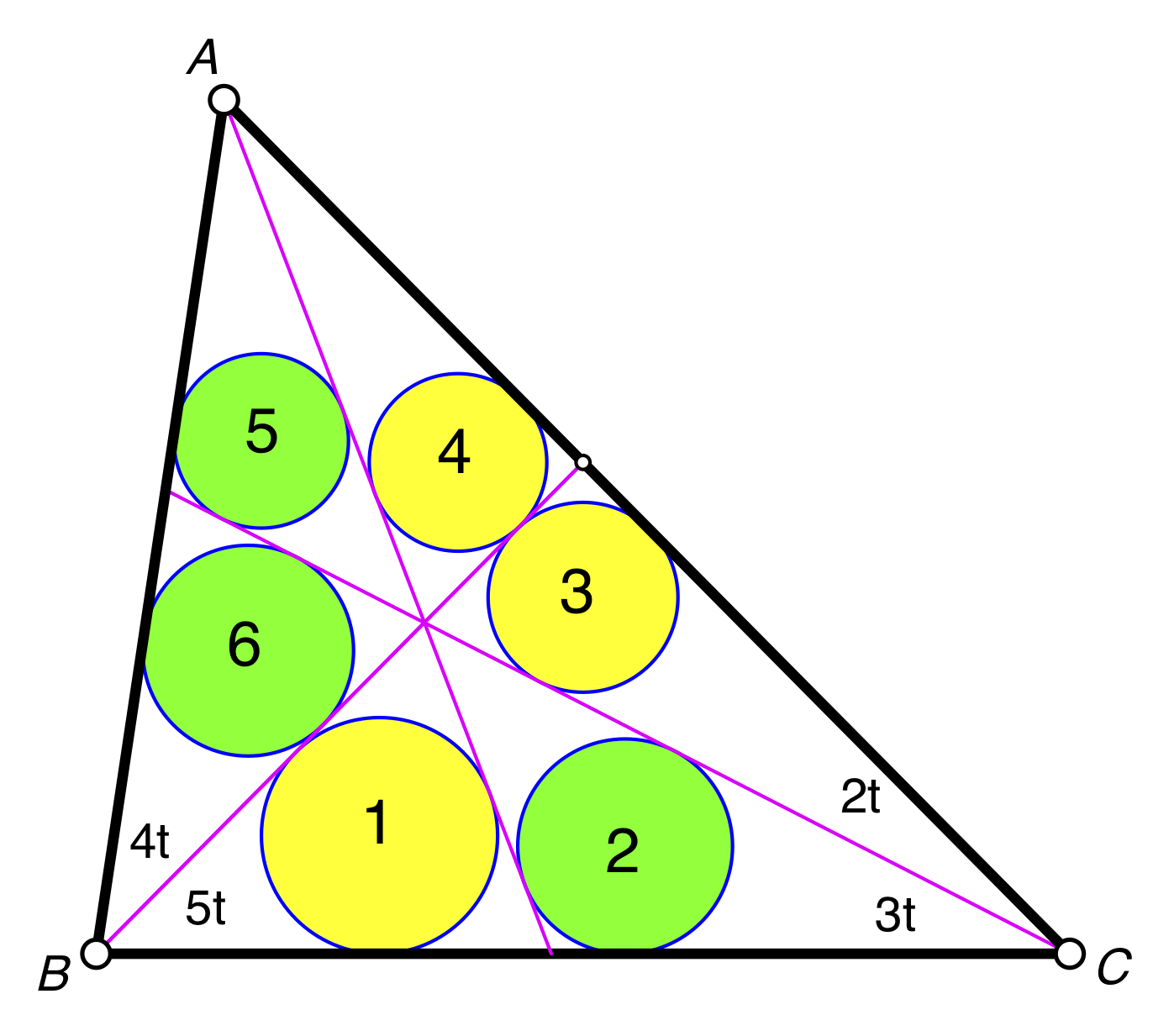}
\caption{$\frac{1}{r_1}+\frac{1}{r_3}+\frac{1}{r_4}=\frac{1}{r_2}+\frac{1}{r_5}+\frac{1}{r_6}$}
\label{fig:6invIncirclesD}
\end{figure}

\begin{theorem}
\label{thm:5invIncircles}
If $P$ is a point inside $\triangle ABC$, and $\angle PBA$, $\angle PBC$, $\angle PCB$, and $\angle PCA$ are as shown in any of the triangles depicted in Figure \ref{fig:parametric1}, then
$$\frac{1}{r_1}+\frac{1}{r_4}+\frac{1}{r_6}=\frac{1}{r_2}+\frac{1}{r_3}+\frac{1}{r_5}.$$
Note that in each of these examples, $t$ is an arbitrary parameter.
\end{theorem}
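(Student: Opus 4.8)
The plan is to follow, step for step, the procedure used to prove Theorem~\ref{thm:linear}. Set $a=\angle ABE$, $b=\angle EBC$, $c=\angle BCF$, $d=\angle FCA$, normalize the circumradius of $\triangle ABC$ to $R=1/2$, and build every length in the figure as an explicit function of $a,b,c,d$: first $AB,BC,CA$ from the Extended Law of Sines; then $AF,FB,AE,CE$ from the Law of Sines and $BD,CD$ from Ceva's Theorem ($BD/CD=(BF\cdot AE)/(AF\cdot CE)$); then $PB,PC,PE,PF$ from the Law of Sines in the relevant sub-triangles; then $PA$ from the Law of Cosines in $\triangle APB$; and finally $PD$ from Menelaus applied to transversal $BPE$ of $\triangle ADC$, chosen precisely so that no second square root appears. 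From these, each $s_i$ and each $K_i$ (for instance $K_1=\frac{1}{2}PB\cdot BD\sin b$) becomes an explicit function of $a,b,c,d$, hence so does each $r_i=K_i/s_i$.

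Next I would impose, for each of the one-parameter configurations pictured in Figure~\ref{fig:parametric1}, the angle relations defining it; these pin down $a,b,c,d$ (and $\angle BAC=\pi-a-b-c-d$) as explicit functions of the single free parameter $t$, so every $r_i$ becomes a function of $t$ alone. I would then form
$$E(t)=\frac{1}{r_1}+\frac{1}{r_4}+\frac{1}{r_6}-\left(\frac{1}{r_2}+\frac{1}{r_3}+\frac{1}{r_5}\right),$$
substitute, and simplify with a computer algebra system; the claim is that $E(t)$ collapses to $0$ identically. Repeating this for each family in Figure~\ref{fig:parametric1} completes the proof.

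The main obstacle is the one already noted for Theorem~\ref{thm:linear}: since $PA$, and through it $PD$, carries a square root, the $r_i$ are not rational in the trigonometric data, so $E(t)$ is a nontrivial nest of radicals and products of sines rather than a tidy trigonometric polynomial; confirming $E(t)\equiv 0$ means isolating and clearing that radical (or squaring) before the simplifier can close the argument. A cleaner route, which I would attempt first, uses the identity behind equation~\eqref{eq:trig}: measuring the inradius of triangle $i$ in the two directions along a bounding cevian gives $1/r_i=(\cot\frac{\theta}{2}+\cot\frac{\phi}{2})/\ell$, where $\ell$ is whichever of $PA,PB,PC$ borders triangle $i$ and $\theta,\phi$ are the adjacent angles. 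Triangles $1,6$ border $PB$; triangles $2,3$ border $PC$; triangles $4,5$ border $PA$. Grouping the six reciprocals this way and using that the angles at $B$ and $C$ split according to the cevians, while the angles at $P$ obey $\angle BPD=\pi-\angle BPA$ and its cyclic analogues, the identity should reduce to a short trigonometric statement in $a,b,c,d$; if that statement holds on one common locus containing all the configurations of Figure~\ref{fig:parametric1}, it proves every case at once and explains why precisely $\{1,4,6\}$ and $\{2,3,5\}$ are the triples that balance.
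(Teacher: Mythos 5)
Your primary plan---reusing the length formulas from the proof of Theorem~\ref{thm:linear}, specializing the angles to each one-parameter family, and verifying that the resulting expression in $t$ simplifies to zero by computer algebra---is exactly what the paper does (its proof of this theorem is literally ``apply the formulas from Theorem~\ref{thm:linear} and check the trigonometric identity symbolically; details omitted''). Your sketched alternative via $PB/r_i=\cot\frac{\theta}{2}+\cot\frac{\phi}{2}$ is a plausible and potentially more illuminating route, but as stated it is only an outline, so the proof that stands is the computational one, which matches the paper.
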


\bigskip
\begin{figure}[h!]
\centering
\includegraphics[width=0.9\linewidth]{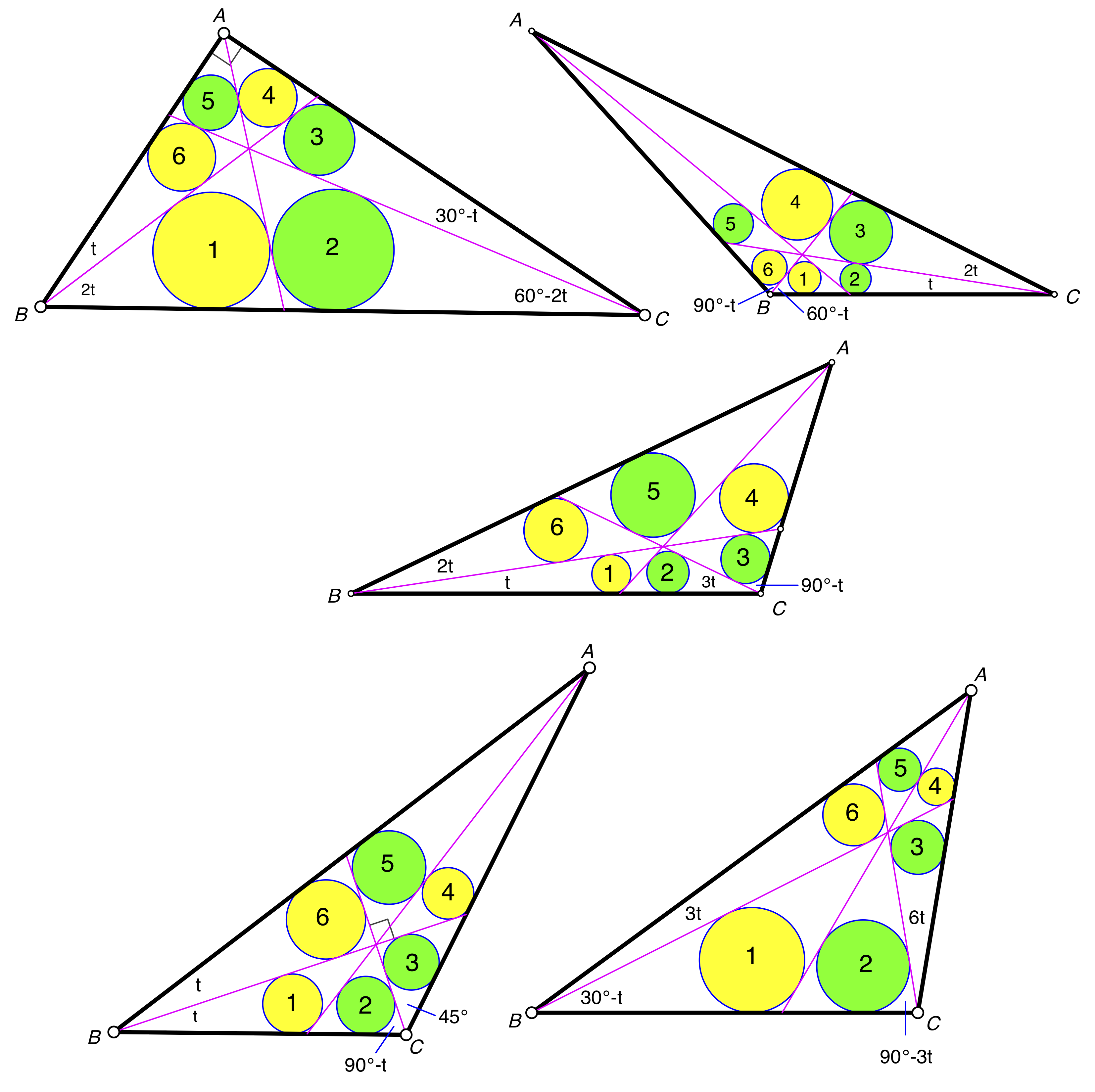}
\caption{$\frac{1}{r_1}+\frac{1}{r_4}+\frac{1}{r_6}=\frac{1}{r_2}+\frac{1}{r_3}+\frac{1}{r_5}$}
\label{fig:parametric1}
\end{figure}

The reader may be wondering how I came up with these results. Here is the procedure I used.

Let $f(r)$ denote some function of $r$ such as $r$, $r^2$, or $1/r$.
I varied all combinations of $a$, $b$, $c$, and $d$ (see Figure \ref{fig:genProof}) through all multiples of $1\degrees$ and calculated
$f(r_1)$ through $f(r_6)$. Using the Mathematica\textsuperscript{\textregistered} function \texttt{FindIntegerNullVector}, I looked for any linear relationships between these
six numbers. If a linear relationship existed which involved all six numbers,
I logged the quadruple $\langle a,b,c,d\rangle$ along with the coefficients of the relationship into a database. After all quadruples were examined, I looked at all pairs of entries in the database that had the same set of coefficients. If $Q_1=\langle a_1,b_1,c_1,d_1\rangle$ and $Q_2=\langle a_2,b_2,c_2,d_2\rangle$ were two such quadruples,
I then formed the quadruple $Q_3$ such that $\langle Q_1,Q_2,Q_3\rangle$ formed an arithmetic progression in each component. Then I examined $Q_3$ to see if it also satisfied the same linear combination. If it did, then this suggested a one-parameter family of solutions (which had
to be confirmed).

\medskip
Note that many solutions were found for various functions $f$, but no one-parameter families of solutions were found except when $f(r)=1/r$. It is not clear why this should be the case.

\medskip
We note several related results that hold whenever $P$ is an arbitrary point inside $\triangle ABC$.

\begin{theorem}
If $P$ is any point inside $\triangle ABC$ (Figure \ref{fig:oddEvenAngles}), then $K_1K_3K_5=K_2K_4K_6$.
\end{theorem}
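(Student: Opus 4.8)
The plan is to reduce the identity to Ceva's Theorem by a one-line area computation. First I would pin down exactly which triangle is which: from the semiperimeter equations used in the proof of Lemma~\ref{lemma:perims}, triangle $1$ is $\triangle PBD$, triangle $2$ is $\triangle PDC$, triangle $3$ is $\triangle PCE$, triangle $4$ is $\triangle PEA$, triangle $5$ is $\triangle PAF$, and triangle $6$ is $\triangle PFB$, where $D=AP\cap BC$, $E=BP\cap CA$, and $F=CP\cap AB$. So the statement to be proved is $[PBD]\,[PCE]\,[PAF]=[PDC]\,[PEA]\,[PFB]$, where $[XYZ]$ denotes the area of $\triangle XYZ$.

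The key observation is that triangles $1$ and $2$ have a common altitude, namely the distance from $P$ to line $BC$; hence $[PBD]/[PDC]=BD/DC$. In the same way $[PCE]/[PEA]=CE/EA$ and $[PAF]/[PFB]=AF/FB$. Multiplying the three ratios gives
$$\frac{K_1K_3K_5}{K_2K_4K_6}=\frac{BD}{DC}\cdot\frac{CE}{EA}\cdot\frac{AF}{FB}.$$
Since $AD$, $BE$, and $CF$ are cevians concurrent at $P$, Ceva's Theorem says the right-hand side equals $1$, and the theorem follows.

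I do not expect any real obstacle here: the argument is short and uses only the shared-altitude area formula together with Ceva's Theorem. The one point requiring a moment's care is the bookkeeping — checking that the ``odd'' triangles $1,3,5$ sit on the $B$-, $C$-, $A$-side of the respective cevian feet, so that the product of their area ratios is the Ceva product $\frac{BD}{DC}\cdot\frac{CE}{EA}\cdot\frac{AF}{FB}$ itself rather than its reciprocal; but this is immediate from the counterclockwise numbering fixed in Section~2. It is worth noting that this result is the exact area-theoretic analogue of Theorem~\ref{thm:Orthocenter}, and it helps explain why the odd/even grouping of the incircles is a natural one to consider.
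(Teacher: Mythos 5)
Your proof is correct, but it takes a genuinely different route from the paper's. The paper never invokes Ceva's Theorem: it writes each area as $2K_i = (\text{product of two cevian segments from } P)\cdot\sin(\text{angle at } P)$, observes that vertical angles at $P$ are equal, and checks that the two triple products $8K_1K_3K_5$ and $8K_2K_4K_6$ are literally the same product of the six segments $PA,\dots,PF$ and the three sines $\sin\alpha,\sin\beta,\sin\gamma$. Your argument instead pairs each odd triangle with the even triangle sharing its base line ($[PBD]/[PDC]=BD/DC$, etc.) and multiplies the three ratios, at which point Ceva's Theorem finishes it. Both are two-line proofs; the paper's has the advantage of being self-contained (indeed, since Ceva's Theorem is itself usually proved by exactly your shared-altitude ratio computation, your argument is essentially a restatement of Ceva, while the paper's sine computation gives an independent verification). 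Your version buys a cleaner conceptual explanation of \emph{why} the odd/even grouping works --- the identity is precisely the Ceva product --- and your bookkeeping of which triangle is which, read off from the semiperimeter equations in Lemma~\ref{lemma:perims}, is accurate, as is your observation that the orientation of the three ratios gives the Ceva product rather than its reciprocal.
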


\bigskip
\begin{figure}[h!]
\centering
\includegraphics[width=0.4\linewidth]{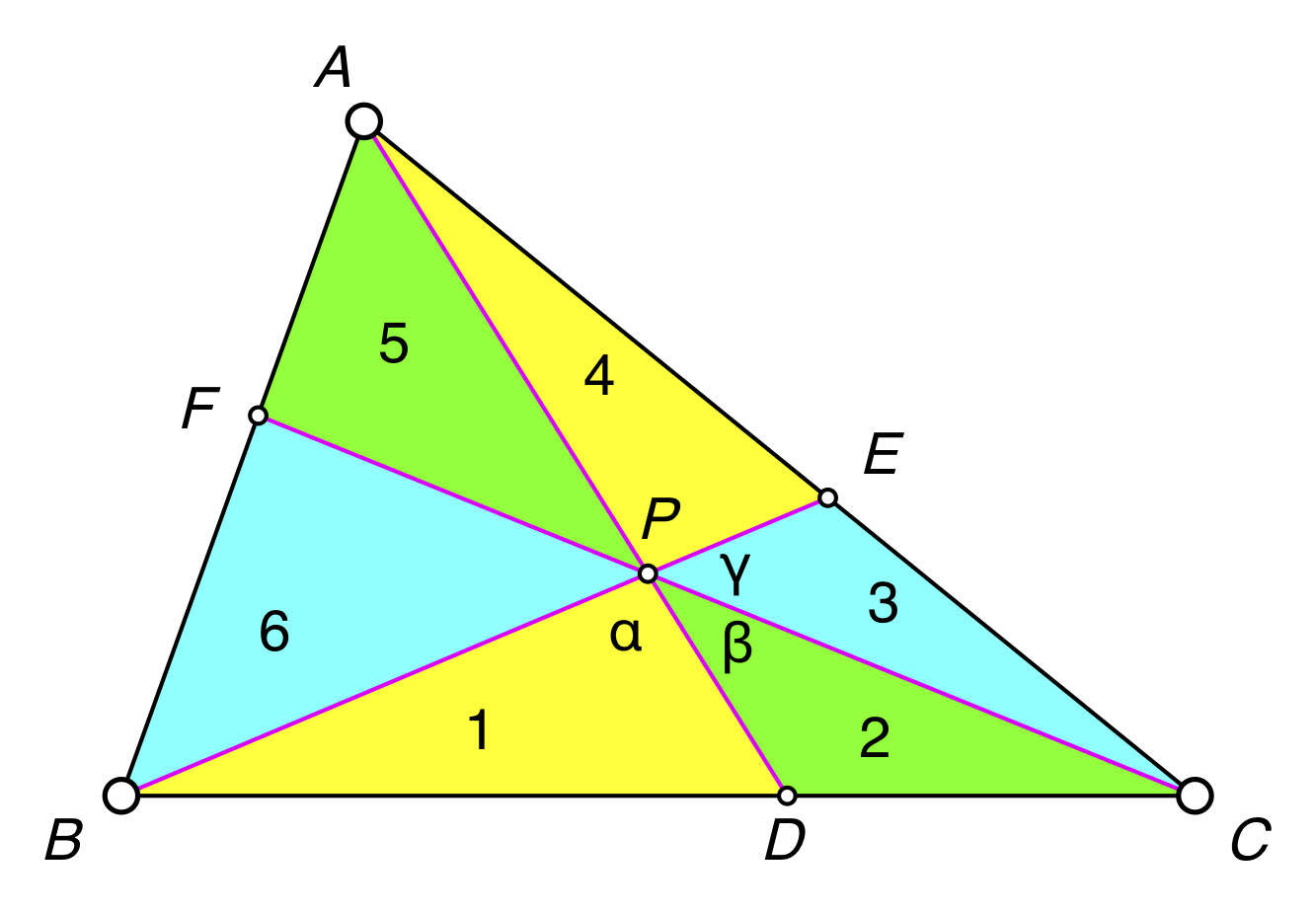}
\caption{$K_1K_3K_5=K_2K_4K_6$}
\label{fig:oddEvenAngles}
\end{figure}

\begin{proof}
Let $\angle BPD=\alpha$, $\angle DPC=\beta$, and $\angle CPE=\gamma$.
Noting that vertical angles are equal and
using the formula for the area of a triangle in terms of two sides and the sine of the included angle, we have the following six equations.
$$
\begin{aligned}
2K_1&=PB\cdot PD\cdot \sin\alpha,\qquad&
2K_2&=PD\cdot PC\cdot \sin\beta,\\
2K_3&=PC\cdot PE\cdot \sin\gamma,&
2K_4&=PE\cdot PA\cdot \sin\alpha,\\
2K_5&=PA\cdot PF\cdot \sin\beta,&
2K_6&=PF\cdot PB\cdot \sin\gamma.
\end{aligned}
$$
Multiplying gives
$$8K_1K_3K_5=(PB\cdot PD\cdot \sin\alpha)(PC\cdot PE\cdot \sin\gamma)(PA\cdot PF\cdot \sin\beta)$$
and
$$8K_2K_4K_6=(PD\cdot PC\cdot \sin\beta)(PE\cdot PA\cdot \sin\alpha)(PF\cdot PB\cdot \sin\gamma).$$
The right sides of both equations are equal, so $K_1K_3K_5=K_2K_4K_6$.
\end{proof} 

\begin{theorem}
\label{thm:relatedCentroid}
\cite[p.~43]{Mannheim} If $P$ is any point inside $\triangle ABC$, then
$$\frac{1}{K_1}+\frac{1}{K_3}+\frac{1}{K_5}=\frac{1}{K_2}+\frac{1}{K_4}+\frac{1}{K_6}.$$
\end{theorem}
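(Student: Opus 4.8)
The plan is to reduce the statement to the distances from $P$ to the six points $A,B,C,D,E,F$ together with the three angles at $P$. Just as in the preceding proof, put $\angle BPD=\angle EPA=\alpha$, $\angle DPC=\angle APF=\beta$, $\angle CPE=\angle FPB=\gamma$ (vertical angles are equal), so that $\alpha+\beta+\gamma=\pi$, and recall the six area formulas $2K_1=PB\cdot PD\sin\alpha$, $2K_2=PD\cdot PC\sin\beta$, and so on cyclically around $P$.

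First I would eliminate $PD$, $PE$, and $PF$. Since $D$ lies on segment $BC$, the cevian piece $PD$ splits $\triangle PBC$ into triangles $1$ and $2$, so $K_1+K_2=[PBC]$; comparing areas and using $\sin\angle BPC=\sin(\alpha+\beta)=\sin\gamma$ gives
$$PB\cdot PC\sin\gamma=PB\cdot PD\sin\alpha+PD\cdot PC\sin\beta,$$
hence $PD=\dfrac{PB\cdot PC\sin\gamma}{PB\sin\alpha+PC\sin\beta}$. The same argument applied to $\triangle PCA$ (split by $PE$, with $K_3+K_4=[PCA]$) and to $\triangle PAB$ (split by $PF$, with $K_5+K_6=[PAB]$) supplies $PE$ and $PF$ in terms of $PA,PB,PC$ and the angles.

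Next I would substitute these into the area formulas; the key point is that each reciprocal $1/K_i$ then splits into exactly two pieces, one of which is the reciprocal of the area of the triangle that surrounds the $i$-th small triangle. For instance,
$$\frac{1}{K_1}=\frac{2\bigl(PB\sin\alpha+PC\sin\beta\bigr)}{PB^{2}\cdot PC\cdot\sin\alpha\sin\gamma}=\frac{2}{PB\cdot PC\sin\gamma}+\frac{2\sin\beta}{PB^{2}\sin\alpha\sin\gamma}=\frac{1}{K_1+K_2}+X,$$
where $X:=\dfrac{2\sin\beta}{PB^{2}\sin\alpha\sin\gamma}$ and the first term equals $1/[PBC]$ because $2[PBC]=PB\cdot PC\sin\gamma$. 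The identical computation gives $\dfrac{1}{K_2}=\dfrac{1}{K_1+K_2}+Y$, and, cyclically, $\dfrac{1}{K_3}=\dfrac{1}{K_3+K_4}+Y$, $\dfrac{1}{K_4}=\dfrac{1}{K_3+K_4}+Z$, $\dfrac{1}{K_5}=\dfrac{1}{K_5+K_6}+Z$, $\dfrac{1}{K_6}=\dfrac{1}{K_5+K_6}+X$, with $Y:=\dfrac{2\sin\alpha}{PC^{2}\sin\beta\sin\gamma}$ and $Z:=\dfrac{2\sin\gamma}{PA^{2}\sin\alpha\sin\beta}$. Adding the three odd-indexed identities gives $\dfrac{1}{K_1+K_2}+\dfrac{1}{K_3+K_4}+\dfrac{1}{K_5+K_6}+X+Y+Z$, and adding the three even-indexed ones gives the same total with $Y+Z+X$ in place of $X+Y+Z$; since these agree, the theorem follows.

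I do not expect a genuine obstacle here; the difficulty is organizational. The one thing to get right is the combinatorics of the angle labels at $P$ (which vertical pair is $\alpha$, which is $\beta$, which is $\gamma$), and then the bookkeeping that attaches to each small triangle its surrounding-triangle term $1/(K_{2j-1}+K_{2j})$ and its residue ($X$, $Y$, or $Z$). Once the two-term decomposition displayed above is in hand, the conclusion is a one-line cancellation, so the effort should go into establishing that decomposition cleanly rather than into a brute-force common-denominator simplification.
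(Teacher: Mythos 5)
Your proof is correct. The paper itself offers no argument for this theorem --- it is stated with only a citation to Mannheim --- so there is no proof of record to compare against; what you have written is a complete, self-contained proof that fits naturally alongside the paper's proof of the adjacent result $K_1K_3K_5=K_2K_4K_6$, reusing the same angle labels $\alpha,\beta,\gamma$ at $P$ and the same six formulas $2K_i = (\text{two distances from }P)\cdot(\text{sine of included angle})$. I checked the key decomposition: from $K_1+K_2=[PBC]$ one gets $PD=\dfrac{PB\cdot PC\sin\gamma}{PB\sin\alpha+PC\sin\beta}$, whence
$$\frac{1}{K_1}=\frac{2\left(PB\sin\alpha+PC\sin\beta\right)}{PB^{2}\,PC\,\sin\alpha\sin\gamma}=\frac{1}{[PBC]}+\frac{2\sin\beta}{PB^{2}\sin\alpha\sin\gamma},$$
and the five analogous identities do produce exactly the residues $X$, $Y$, $Z$ in the cyclic pattern you describe ($1/K_1$ and $1/K_6$ share $X$, $1/K_2$ and $1/K_3$ share $Y$, $1/K_4$ and $1/K_5$ share $Z$), so the odd and even sums each equal $\frac{1}{[PBC]}+\frac{1}{[PCA]}+\frac{1}{[PAB]}+X+Y+Z$. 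The only point worth making explicit in a final write-up is the use of $\sin(\alpha+\beta)=\sin\gamma$ (from $\alpha+\beta+\gamma=\pi$) when computing $[PBC]$, and likewise for the other two surrounding triangles; otherwise the bookkeeping is exactly as you laid it out.
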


\begin{open}
Is there a simple relationship between the $r_i$ that holds for all triangles and all points $P$ inside the triangle?
\end{open}

A simpler question may be the following.

\begin{open}
Is there a simple relationship between the $r_i$ that holds for all points $P$ inside an equilateral triangle?
\end{open}

\bigskip 
\section{Opportunities for Future Research}

When the three cevians through a point inside a triangle are extended to the cirumcircle,
other circles can be formed.
Figure \ref{fig:mixtilinear} shows some examples.
Each yellow circle associated with the triangle on the left is the incircle of a region bounded by two cevians and the circumcircle.
Each green circle associated with the triangle on the right is the incircle of a region bounded by one side of the triangle, one cevian, and the circumcircle.

\bigskip
\begin{figure}[h!]
\centering
\includegraphics[width=0.7\linewidth]{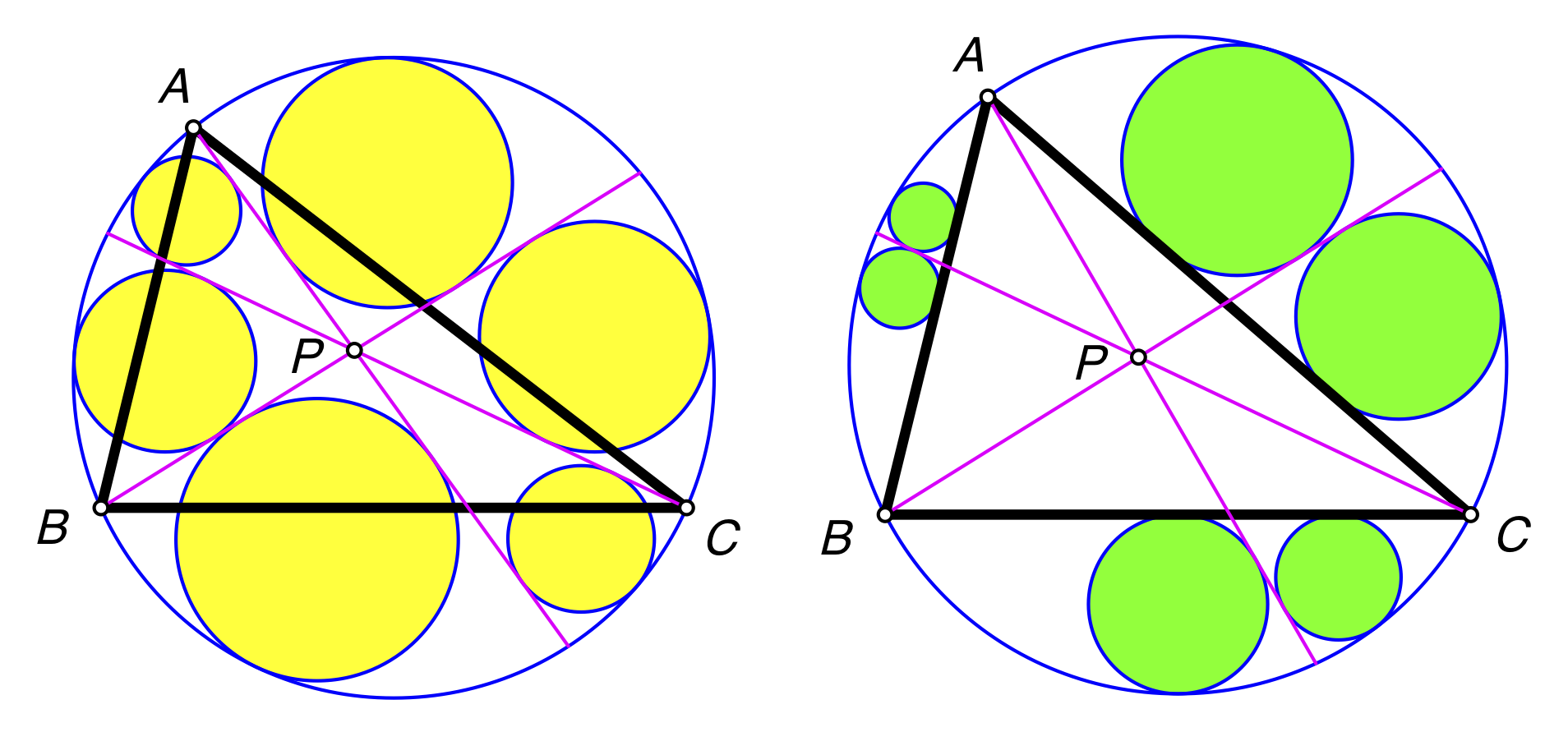}
\caption{sets of incircles tangent to circumcircle of $\triangle ABC$}
\label{fig:mixtilinear}
\end{figure}

\begin{open}
Investigate the relationship between the radii in each set of six incircles shown in Figure \ref{fig:mixtilinear}.
\end{open}

We can also form regions bounded by the incircle of $\triangle ABC$.
Figure \ref{fig:mixtilinearsIn} shows some examples.
Each red circle associated with the triangle on the left is the incircle of a region formed by two cevians and the incircle of $\triangle ABC$.
Each blue circle associated with the triangle on the right is the incircle of a region formed by one side of the triangle, one cevian, and the incircle of $\triangle ABC$.

\begin{figure}[h!]
\centering
\includegraphics[width=0.8\linewidth]{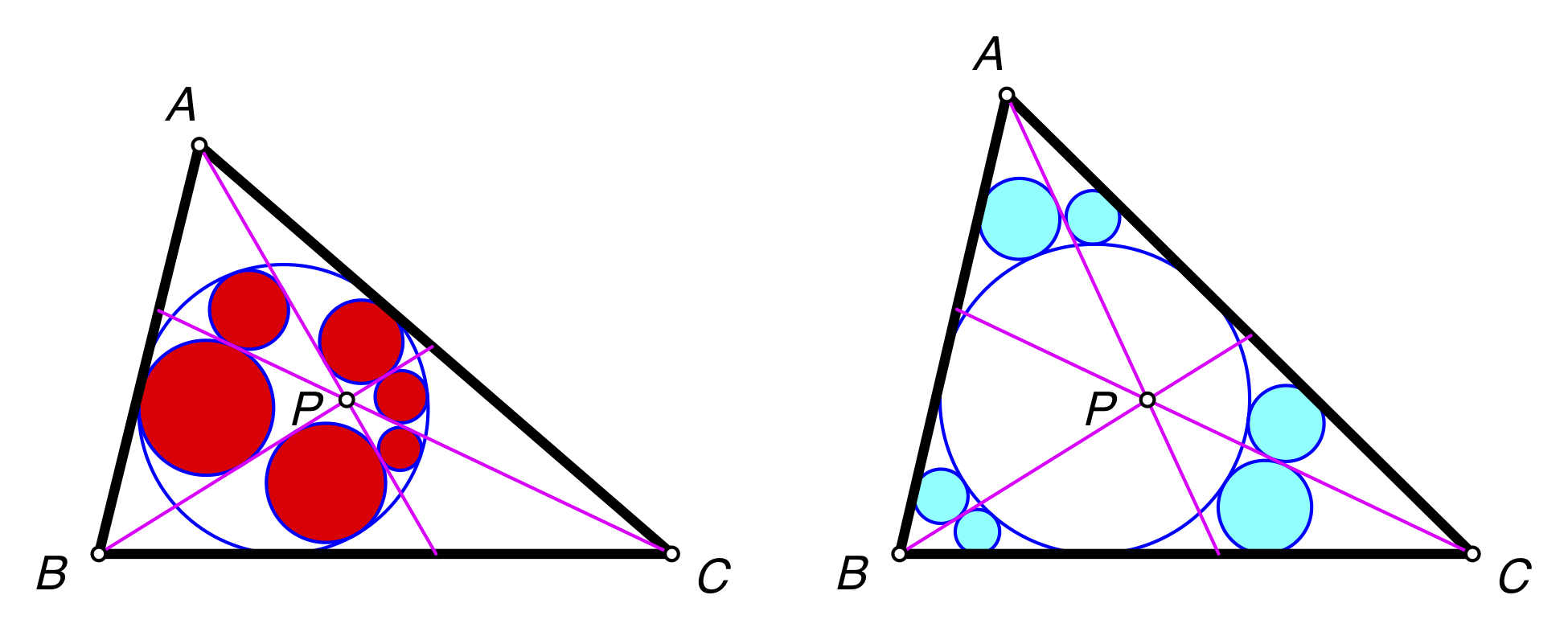}
\caption{sets of incircles tangent to incircle of $\triangle ABC$}
\label{fig:mixtilinearsIn}
\end{figure}

\begin{open}
Investigate the relationship between the radii in each set of six incircles shown in Figure \ref{fig:mixtilinearsIn}.
\end{open}

\begin{open}
For a fixed triangle $ABC$, characterize those points $P$ such that $r_1+r_3+r_5=r_2+r_4+r_6$.
\end{open}
There are many such points. Some of them look like they lie on a straight line.

\medskip
We have found relationships between the $r_i$ when $P$ is the orthocenter, circumcenter,
centroid, and incenter.

\begin{open}
Investigate the relationship between the $r_i$ when $P$ is some other notable point, such as the nine-point center, the Nagel Point, or the Gergonne Point.
\end{open}

\bigskip

\end{document}